\newtheorem{lemma}{Lemma}
\newtheorem{theorem}{Theorem}
\newtheorem{prop}{Proposition}
\newtheorem{coro}{Corollary}
\theoremstyle{remark}
\newtheorem{remark}{Remark}
\def\N{{\mathbb N}} 
\def\Z{{\mathbb Z}}
\def\C{{\mathbb C}}
\def\s{{\bf s}}
\def\x{{\bf x}}
\def\b{{\bf b}}
\def\k{{\bf k}}
\def\Nb{{\bf N}}
\def\P{{\bf P}}
\newcommand{\zerob}{\boldsymbol{0}}
\newcommand{\alphab}{\boldsymbol{\alpha}}
\newcommand{\gammab}{\boldsymbol{\gamma}}
\newcommand{\thetab}{\boldsymbol{\theta}}
\newcommand{\mub}{\boldsymbol{\mu}}
\newcommand{\deltab}{\boldsymbol{\delta}}
\newcommand{\be}{\begin{enumerate}}
\newcommand{\ee}{\end{enumerate}}
\let\ds=\displaystyle
\begin{document}
%%***********************************************************
\title{Values at non-positive integers of partially twisted multiple zeta-functions I}
\author{Driss Essouabri \hskip 0.4cm and \hskip 0.4cm
Kohji Matsumoto}
\date{}
\maketitle 
%\vspace{3mm}

\noindent
{{\bf {Abstract.}}\\
{\small We study the behavior of partially twisted multiple zeta-functions.
We give new closed and explicit formulas for special values at non-positive integer points of 
such zeta-functions.
Our method is based on a result of M. de Crisenoy on the fully twisted case and the Mellin-Barnes integral formula.
}

\medskip

\noindent
{\small {\bf Mathematics Subject Classifications: Primary 11M32; Secondary 11M41.} \\
{\bf Key words: multiple zeta-function, Euler-Zagier multiple zeta-function, special values, meromorphic continuation, Lerch zeta-function, Mellin-Barnes formula.}}
%\vskip .1 in
\setcounter{tocdepth}{2}
% \tableofcontents
%\vspace{15mm}

%%%%%%%%%%%%%%%%%%%%%%%%%%%%%%%%%%%%%%%%%%%%%%%%%%%%%%%%%%%%%%%%%%%%%%%%%%%%%%%%%%%%%%%%%
\section {Introduction}\label{sec1}
%%%%%%%%%%%%%%%%%%%%%%%%%%%%%%%%%%%%%%%%%%%%%%%%%%%%%%%%%%%%%%%%%%%%%%%%%%%%%%%%%%%%%%%%%

Let $\mathbb{N}$, $\mathbb{N}_0$, $\mathbb{Z}$, $\mathbb{R}$, and $\mathbb{C}$ be the sets of 
positive integers, non-negative integers, 
rational integers, real numbers, and complex numbers, respectively.

Let $\gammab =(\gamma_1,\dots, \gamma_n) \in \C^n$ and  $\b =(b_1,\dots, b_n) \in \C^n$ be two vectors of complex parameters such that 
$\Re (\gamma_j) >0$ and $\Re (b_j) >-\Re (\gamma_1)$ for all $j=1,\dots, n$.
The generalized  Euler-Zagier multiple zeta-function is defined  
for  $n-$tuples of complex variables $\s=(s_1,\dots, s_n)$ by
\begin{equation}\label{EZMzetadef}
\zeta_n(\s; \gammab; \b) :=
\sum_{m_1\geq 1 \atop m_2, \dots, m_n \geq 0} \frac{1}{\prod_{j=1}^n (\gamma_1 m_1+\dots+ \gamma_j m_j +b_j)^{s_j}}.
\end{equation}
%If $b_1=0$, $b_j=\gamma_2+\dots+\gamma_j$ for all $j=2,\dots, n$ 
%and $\gamma_j=1$ for all $j=1,\dots, n$, then $\zeta_n(\s; \gammab; \b)$ coincides with 
%the classical Euler-Zagier multiple zeta-function
%$$
%\sum_{1\leq m_1< m_2<\dots <m_n } \frac{1}{m_1^{s_1}\dots m_n^{s_n}}.
%$$
This series converges absolutely in the domain 
\begin{equation}\label{domaincv}
\mathcal D_n:=\{\s=(s_1,\dots, s_n)\in \C^n \mid \Re (s_j+\dots+s_n) >n+1-j~~{\rm for \; all}\; j=1,\dots, n\}
\end{equation}
(see \cite{MatIllinois}),
and has the meromorphic continuation to the whole complex space $\C^n$ whose 
possible poles are located in the union of the hyperplanes 
$$s_j+\dots+s_n =(n+1-j)-k_j \quad (1\leq j\leq n,~k_1,\dots, k_n \in \N_0).$$
Moreover it is known that for $n\geq 2$, the points 
$\s=-\Nb$, where $\Nb =(N_1,\dots, N_n)\in \N_0^n$, 
lie in most cases on the singular locus above and are points of indeterminacy.
The evaluation of (limit) values of multiple zeta-functions at those points was first considered by
S. Akiyama, S. Egami and Y. Tanigawa \cite{AET}, and then studied by several subsequent papers 
%by \cite{AT}, \cite{sasaki1},
%\cite{sasaki2}, 
such as \cite{komori}, \cite{onozuka}, \cite{MOW} and \cite{EM1}.

In \cite{komori}, Y. Komori proved that for any $\Nb =(N_1,\dots, N_n)\in \N_0^n$ and $\thetab=(\theta_1,\dots, \theta_n) \in \C^n$ such that $\theta_j+\dots+\theta_n \neq 0$ for all $j=1,\dots, n$, 
the limit 
\begin{equation}\label{gmzvtheta}
\zeta_n^{\thetab} (-\Nb; \gammab; \b):=\lim_{t\rightarrow 0}\zeta_n(-\Nb+t\thetab; \gammab; \b)
\end{equation} 
exists, and expressed this limit in terms of $\Nb$, $\thetab$ and generalized multiple Bernoulli numbers defined implicitly as coefficients of some multiple series. 

In \cite{EM1}, we
gave a closed explicit formula for $\zeta_n^{\thetab} (-\Nb; \gammab; \b)$ in terms of $\Nb$, $\thetab$ and only classical Bernoulli numbers
$B_k$ $(k\in \N_0)$ defined by
\begin{equation}\label{bernoulli}
\frac{x}{e^x-1}=\sum_{k=0}^{\infty}B_k\frac{x^k}{k!}.
\end{equation}

Moreover in \cite{EM2} we extended partially this result to the case of more general multiple
zeta-functions defined by
\begin{equation}\label{polyzeta}
\zeta_n(\s,\P)=\sum_{m_1,\ldots,m_n\geq 1}\prod_{j=1}^n P_j(m_1,\ldots,m_j)^{-s_j},
\end{equation}
where $\P=(P_1,\ldots,P_n)$ with certain polynomials $P_j\in\mathbb{R}[X_1,\ldots,X_j]$.
In this general case, instead of Bernoulli numbers, certain period integrals appear
in the result.

Now we consider the twisted situation.    
Let $\mathbb{T}=\{z\in\mathbb{C}\mid |z|=1\}$, and
let $\boldsymbol{\mu}_k=(\mu_1,\ldots,\mu_k)\in(\mathbb{T}\setminus\{1\})^k $, 
where $k\in \{0,\dots, n\}$. 

The natural twisted version of \eqref{polyzeta}
is
\begin{equation}\label{twistedpolyzeta}
\zeta_n(\s,\P,\boldsymbol{\mu}_k)=\sum_{m_1,\ldots,m_n\geq 1} 
\frac{\prod_{j=1}^k \mu_j^{m_j}}{\prod_{j=1}^n P_j(m_1,\ldots,m_j)^{s_j}}.
\end{equation}
It follows from the method of \cite{essouabriThesis} (see also \cite{essouabriFourier}) that these series have meromorphic continuation to $\C^n$ for 
fairly general class of polynomials $P_j$.

When $k=n$, that is the ``fully twisted'' case, this type of multiple series 
was studied by M. de Crisenoy \cite{dC}.
Under certain conditions, he proved that
$\zeta_n(\s,\P,\boldsymbol{\mu}_n)$ is entire, so its behavior is much simpler than the
non-twisted case.     He obtained an explicit formula for its values 
at non-positive integer points in terms of Lerch zeta-functions
(see Proposition \ref{prop_dC} below for the exact statement).

The aim of the present series of papers is to consider the case when $k<n$.     
Then $\zeta_n(\s,\P,\boldsymbol{\mu}_k)$ is usually not entire, and
the complexity of its set of singularities and therefore the complexity of its special values increases when $k$ decreases.
Our strategy is to begin with the result of de Crisenoy \cite{dC} in the case $k=n$, 
and first consider the case $k=n-1$ by using the Mellin-Barnes integral formula.
Most of the results presented in this paper are actually restricted in this case.
However we also try to consider the case $k=n-2$.
%and describe the general principle
%how to treat the case of the general values of $k$.
By the same method it is possible to treat the case $k\leq n-3$ in principle,
but the actual argument will become more and more complicated in practice.

%We postpone the study of the general case to our next paper \cite{EMtwisted2}, 
In this paper we mainly study the special case when all $P_j$ are linear
polynomials.    After reviewing the result of de Crisenoy
briefly in the next section, we first state the main results in Section \ref{sec3}.    
In the case of twisted generalized Euler-Zagier multiple zeta-functions 
we will give the
completely explicit formulas (Theorems \ref{main2} and \ref{main3}), whose proofs are shown in Sections \ref{sec4} and \ref{sec5}.
We also prove the explicit formula in
the simplest non-linear situation, the ``power sum'' case (Theorem \ref{main_powercase}), 
which will be proved in Section \ref{sec6}.
The treatment of the general non-linear case is postponed to our
next paper \cite{EMtwisted2}.  
\bigskip

{\bf Acknowledgments}
The authors benefit from the financial support of 
the French-Japanese Project entitled ``Zeta-functions of Several Variables and  Applications" 
(PRC CNRS/JSPS 2015-2016).

%%%%%%%%%%%%%%%%%%%%%%%%%%%%%%%%%%%%%%%%%%%%%%%%%%%%%%%%%%%%%%%%%%%%%%%%%%%%%%%%%%
\section{Review of de Crisenoy's result}\label{sec2}
%%%%%%%%%%%%%%%%%%%%%%%%%%%%%%%%%%%%%%%%%%%%%%%%%%%%%%%%%%%%%%%%%%%%%%%%%%%%%%%%%%%
Here we recall the result of de Crisenoy \cite{dC}.
Let $P_1,\ldots,P_L,Q\in\mathbb{R}[X_1,\ldots,X_n]$,
$\boldsymbol{\mu}_n=(\mu_1,\ldots,\mu_n)\in(\mathbb{T}\setminus\{1\})^n $,
and $\mathbf{s}=(s_1,\ldots,s_L)\in\mathbb{C}^L$.
We write $\P=(P_1,\ldots,P_L)$. 
He considered the general multiple series of the form
\begin{align}\label{def_general}
Z_n(\mathbf{s},\P,Q,\boldsymbol{\mu}_n)=
\sum_{m_1,\ldots,m_n\geq 1} 
\frac{\left(\prod_{j=1}^n \mu_j^{m_j}\right)Q(m_1,\ldots,m_n)}
{\prod_{\ell=1}^L P_{\ell}(m_1,\ldots,m_n)^{s_{\ell}}}.
\end{align}
He introduced the condition {\it HDF}.    A polynomial $P\in\mathbb{R}[X_1,\ldots,X_n]$
is called {\it HDF} if the following (i) and (ii) hold:

(i) $P(\x)>0$ for all $\x=(x_1,\ldots,x_n)\in [1,\infty)^n$, 

(ii) there exists $\varepsilon_0>0$ such that 
$$
\frac{\partial^{\boldsymbol{\beta}}P}{P}(\x)\ll \prod_{j=1}^n 
x_j^{-\varepsilon_0 \beta_j}
$$
for any $\boldsymbol{\beta}=(\beta_1,\ldots,\beta_n)\in \mathbb{N}_0^n$ and
$\x=(x_1,\ldots,x_n)\in [1,\infty)^n$.
(Or equivalently, if $\beta_j\geq 1$
for some $j\in\{1,\ldots,n\}$, then
$(\partial^{\boldsymbol{\beta}}P/P)(\x)\ll x_j^{-\varepsilon_0}$ for
$\x=(x_1,\ldots,x_n)\in [1,\infty)^n$.)

For any $\boldsymbol{\alpha}=(\alpha_1,\ldots,\alpha_L)\in\mathbb{N}_0^L$, we
define $a_{\k,\boldsymbol{\alpha}}=a_{\k,\boldsymbol{\alpha}}(\P,Q)$ 
as the coefficients of the expansion
$$
Q(X_1,\ldots,X_n)\prod_{\ell=1}^L P_{\ell}(X_1,\ldots,X_n)^{\alpha_{\ell}}
=\sum_{\k\in\mathbb{N}_0^n}
a_{\k,\boldsymbol{\alpha}}X_1^{k_1}\cdots X_n^{k_n}.
$$
Denote by $S(\boldsymbol{\alpha})=S(\boldsymbol{\alpha};\P,Q)$ the finite subset 
of $\mathbb{N}_0^n$ consisting of all 
$\k$ for which $a_{\k,\boldsymbol{\alpha}}\neq 0$.
We write $|\x|=|x_1|+\cdots+|x_n|$.
Then, de Crisenoy proved the following results.

\begin{prop}\label{prop_dC}
{\rm (de Crisenoy \cite{dC})}
Assume that the polynomials $P_1,\ldots,P_L$ satisfy the condition {\it HDF}, and that
$\prod_{\ell=1}^L P_{\ell}(\x)$ tends to $\infty$ as $|\x|\to\infty$, 
$\x\in[1,\infty)^n$.  Then

(i) $Z_n(\mathbf{s},\P,Q,\boldsymbol{\mu}_n)$ can be continued to the whole space
$\mathbb{C}^L$ as an entire function.

(ii) For any $\boldsymbol{\alpha}=(\alpha_1,\ldots,\alpha_L)\in\mathbb{N}_0^L$, we have
\begin{align}\label{formula_dC}
Z_n(-\boldsymbol{\alpha},\P,Q,\boldsymbol{\mu}_n)=\sum_{\k\in S(\boldsymbol{\alpha})}
a_{\k,\boldsymbol{\alpha}}\prod_{j=1}^n \phi_{\mu_j}(-\k_j),
\end{align}
where $\phi_{\mu}(s)=\sum_{m=1}^{\infty}\mu^m m^{-s}$ is the twisted (Lerch)
zeta-function.
\end{prop}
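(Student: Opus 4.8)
The plan is to establish (i) and (ii) simultaneously by reducing to the one-variable Lerch zeta-function via a separation-of-variables argument combined with the standard ``partial integration / Mellin transform'' machinery for twisted series. First I would note that the condition \emph{HDF}, together with the hypothesis $\prod_\ell P_\ell(\x)\to\infty$ on $[1,\infty)^n$, guarantees absolute convergence of the defining series in some right half-space product $\{\Re(s_\ell)>\sigma_0\}$, so $Z_n$ is holomorphic there; this is the easy starting point. For the continuation, the key idea is to exploit that each $\mu_j\in\mathbb{T}\setminus\{1\}$, so the one-dimensional sums $\sum_{m\ge 1}\mu_j^m m^{-s}=\phi_{\mu_j}(s)$ are \emph{entire} in $s$ (Abel summation / the classical fact that Lerch zeta-functions with non-trivial root-of-unity twist have no pole). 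The analytic input that does the real work is a uniform-in-$\x$ expansion: using \emph{HDF}(ii) one writes, for a fixed large parameter, a Taylor-type development of $\prod_\ell P_\ell(\x)^{-s_\ell}$ that converts the multidimensional problem into finitely many shifted copies of products $\prod_j \phi_{\mu_j}(\cdot)$ plus a remainder term that is holomorphic in a strictly larger region. Iterating this expansion pushes the domain of holomorphy to all of $\mathbb{C}^L$, proving (i).

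The second step is to pin down the values at $\mathbf{s}=-\boldsymbol{\alpha}$. Here I would use a more structural argument rather than tracking the expansion: at $\mathbf{s}=-\boldsymbol{\alpha}$ the ``integrand'' $Q(\m)\prod_\ell P_\ell(\m)^{\alpha_\ell}$ is literally a polynomial in $m_1,\dots,m_n$, namely $\sum_{\k\in S(\boldsymbol{\alpha})} a_{\k,\boldsymbol{\alpha}} m_1^{k_1}\cdots m_n^{k_n}$ by the very definition of the coefficients $a_{\k,\boldsymbol{\alpha}}$. One then wants to assert
\begin{equation*}
Z_n(-\boldsymbol{\alpha},\P,Q,\boldsymbol{\mu}_n)
=\sum_{\m\in\N^n}\Bigl(\prod_j\mu_j^{m_j}\Bigr)\sum_{\k\in S(\boldsymbol{\alpha})}a_{\k,\boldsymbol{\alpha}}\prod_j m_j^{k_j}
=\sum_{\k\in S(\boldsymbol{\alpha})}a_{\k,\boldsymbol{\alpha}}\prod_{j=1}^n\Bigl(\sum_{m_j\ge 1}\mu_j^{m_j}m_j^{k_j}\Bigr)
=\sum_{\k\in S(\boldsymbol{\alpha})}a_{\k,\boldsymbol{\alpha}}\prod_{j=1}^n\phi_{\mu_j}(-k_j),
\end{equation*}
where in the last line the formal divergent sum $\sum_{m\ge1}\mu^m m^{k}$ is interpreted as the value $\phi_\mu(-k)$ of the entire continuation. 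The content of the proof is to justify that this formal manipulation coincides with the actual analytic continuation — i.e. that the continuation process from step one, when specialized to $\mathbf{s}=-\boldsymbol{\alpha}$, collapses exactly to the finite linear combination above. Concretely I would feed $Q\cdot\prod_\ell P_\ell^{\alpha_\ell}$ into the machine as the ``numerator polynomial'' (so that $Z_n(\mathbf{s}+\boldsymbol{\alpha},\P,\,Q\prod_\ell P_\ell^{\alpha_\ell},\boldsymbol{\mu}_n)=Z_n(\mathbf{s},\P,Q,\boldsymbol{\mu}_n)$ as meromorphic functions), reduce to evaluating at $\mathbf{s}=\boldsymbol0$, and use the factorization of the numerator into monomials together with the multiplicativity of the twisted sum over the independent indices $m_1,\dots,m_n$.

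The main obstacle is making the interchange of summation and analytic continuation rigorous when the series $\sum_m \mu^m m^{k}$ does not converge — this is precisely where Abel summation (or, equivalently, a contour-integral/Mellin representation of $\phi_\mu$) must be invoked carefully and uniformly in the remaining variables so that the error terms in the \emph{HDF} expansion genuinely vanish in the limit $\mathbf{s}\to-\boldsymbol{\alpha}$. A secondary technical point is verifying that the iterated expansion used for (i) really does reach every point of $\mathbb{C}^L$ with controlled remainders; this is a bookkeeping argument using \emph{HDF}(ii) to bound the mixed partial derivatives $\partial^{\boldsymbol\beta}(\prod_\ell P_\ell^{-s_\ell})$ and the growth $\prod_\ell P_\ell(\x)\to\infty$ to absorb them, but it requires care to keep the estimates uniform. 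Since all of this is carried out in de Crisenoy's paper \cite{dC}, in the present paper I would simply cite Proposition~\ref{prop_dC} and not reproduce the argument.
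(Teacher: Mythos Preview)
The paper does not prove this proposition at all: it is stated purely as a citation of de Crisenoy's result \cite{dC}, with no argument or sketch provided. Your proposal ultimately reaches the same conclusion---simply cite \cite{dC} and do not reproduce the proof---so in that sense you agree with the paper's treatment; the proof sketch you offer beforehand (HDF-based Taylor expansion for the entire continuation, then the polynomial identity $Q\prod_\ell P_\ell^{\alpha_\ell}=\sum_{\k}a_{\k,\boldsymbol\alpha}\prod_j X_j^{k_j}$ combined with the entireness of $\phi_\mu$ to justify the factorization at $\mathbf{s}=-\boldsymbol\alpha$) is a reasonable outline of de Crisenoy's method and goes beyond what the present paper includes.
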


It is to be noted here that, since the point $\s=-\boldsymbol{\alpha}$ is 
a regular point of
$Z_n(\mathbf{s},\P,Q,\boldsymbol{\mu}_n)$ because of the assertion (i), we can evaluate 
the value at $\s=-\boldsymbol{\alpha}$ as a finite definite value in the assertion (ii).

%%%%%%%%%%%%%%%%%%%%%%%%%%%%%%%%%%%%%%%%%%%%%%%%%%%%%%%%%%%%%%%%%%%%%%%%%%%%%%%%%
\section{Statement of results}\label{sec3}
%%%%%%%%%%%%%%%%%%%%%%%%%%%%%%%%%%%%%%%%%%%%%%%%%%%%%%%%%%%%%%%%%%%%%%%%%%%%%%%%%

Our main aim in this paper is to study the partially twisted multiple zeta-functions whose
denominators are linear forms.
Let $\gammab =(\gamma_1,\dots, \gamma_n) \in \C^n$ and  $\b =(\b_1,\dots, b_n) \in \C^n$ be such that 
$\Re (\gamma_j) >0$ and $\Re (b_j) >-\Re (\gamma_1)$ for all $j=1,\dots, n$.

Let $0\leq k\leq n$.
The partially twisted generalized Euler-Zagier multiple zeta-function is defined formally 
for $n-$tuples of complex variables $\s=(s_1,\dots, s_n)$ by
\begin{align}\label{1-1}
\zeta_{n,k}(\mathbf{s},\boldsymbol{\gamma},\mathbf{b},\boldsymbol{\mu}_k)=
\sum_{\stackrel{m_1\geq 1}{m_2,\ldots,m_n\geq 0}}\frac{\prod_{j=1}^k \mu_j^{m_j}}
{\prod_{j=1}^n (\gamma_1 m_1+\cdots+\gamma_j m_j+b_j)^{s_j}}
\end{align}
(when $k=0$, we understand that the numerator on the right-hand side is 1)
which is absolutely convergent in the domain $\mathcal D_n$ (see \eqref{domaincv}).
The meromorphic continuation and the location of singularities of the function
$\zeta_{n,k}(\mathbf{s},\boldsymbol{\gamma},\mathbf{b},\boldsymbol{\mu}_k)$
are discussed in \cite{FKMT-Bessatsu} (which are partly announced in \cite{FKMT-AJM}).

When $k=n$, this series is a special case of \eqref{def_general} studied by 
de Crisenoy \cite{dC}, whose result implies that 
$\zeta_{n,n}(\mathbf{s},\boldsymbol{\gamma},\mathbf{b},\boldsymbol{\mu}_n)$ 
%(where $\boldsymbol{\mu}=\boldsymbol{\mu}_n$)
is entire in $\mathbf{s}$. 
%and obtained an explicit formula for the values of
%$\zeta_{n,n}(\mathbf{s},\boldsymbol{\gamma},\mathbf{b},\boldsymbol{\mu})$
%at non-positive integer points in terms of Lerch zeta-functions.

When $k<n$, $\zeta_{n,k}(\mathbf{s},\boldsymbol{\gamma},\mathbf{b},\boldsymbol{\mu}_k)$ has meromorphic continuation to $\C^n$, but is not entire. 
Moreover, the complexity of its set of singularities and therefore the complexity of its special values, increases when $k$ decreases. 
In our previous article \cite{EM1} we handled the case $k=0$ (that is, the non-twisted case) by a method different from that in \cite{dC}.  

In the present paper we develop another approach.
Our following two  
theorems (i.e. Theorem \ref{main2} and Theorem \ref{main3}, proved in Section \ref{sec4} and Section \ref{sec5}, respectively) deal with the cases $k=n-1$ and $k=n-2$. In these cases we use, in addition to de Crisenoy's result (Proposition \ref{prop_dC} above), the Mellin-Barnes formula to determine the set of singularities and the values of 
$\zeta_{n,k}(\mathbf{s},\boldsymbol{\gamma},\mathbf{b},\boldsymbol{\mu}_k)$ at non-positive integers.

We prepare some more notations.
\be
%\item For any $\mu \in \mathbb{T}$, let 
%$\ds \zeta_\mu (s)=\sum_{m=1}^{\infty}\mu^m m^{-s} $ be the Lerch zeta-function;
\item For any $a\in \C\setminus (-\N_0)$, let $\ds \zeta(s, a)=\sum_{m=0}^{\infty}(m+a)^{-s}$ be the 
Hurwitz zeta-function
(as for the definition of
$\zeta(s,a)$ for any $a\in \C\setminus (-\N_0)$, see \cite[Lemma 1]{MatJNT});
\item For $\s=(s_1,\ldots,s_n) \in \C^n$, and $k<n$, denote
$\s_k =(s_1,\dots, s_k)$.     Similarly we use the notation
$\gammab_k, \b_k,\mub_k$ etc.  
\item For any $\Nb =(N_1,\dots, N_n) \in \N_0^n$ and any $l\in \Z$, let 
$$\ds \Nb_{n-1}^*(l)=(N_1,\dots, N_{n-2}, N_{n-1}+N_n+l).$$
\item 
For $\alphab=(\alpha_1,\ldots,\alpha_n) \in \N_0^n$ and  $\b=(b_1,\ldots,b_n) \in \C^n$ 
we define the polynomial (in $\b$)  $c_n(\b;\alphab, \k)$ 
(where $\k=(k_1,\ldots,k_n) \in \N_0^n, |\k|\leq |\alphab|)$) as the coefficients of the polynomial 
$\prod_{j=1}^n (\sum_{i=1}^j X_i +b_j)^{\alpha_j}$; that is 
\begin{equation}\label{benoulliexpension}
\prod_{j=1}^n (\sum_{i=1}^j X_i +b_j)^{\alpha_j}%= \sum_{\k \in \N_0^n,\atop  |\k|\leq |\alphab|} c_n(\b;\alphab, \k) ~\X^\k 
= \sum_{\k \in \N_0^n,\atop  |\k|\leq |\alphab|}
 c_n(\b;\alphab, \k) ~X_1^{k_1} \dots X_n^{k_n}.
 \end{equation}
 \item
 Similarly, for $\gammab =(\gamma_1,\dots, \gamma_n) \in \C^n$, 
 $\widetilde{c}_n(\b;\alphab, \k)$  are defined by
\begin{equation}\label{def-widetilde-c}
\prod_{j=1}^n (\sum_{i=1}^j \gamma_iX_i +b_j)^{\alpha_j}%= \sum_{\k \in \N_0^n,\atop  |\k|\leq |\alphab|} \widetilde{c}_n(\b;\alphab, \k) ~\X^\k 
 = \sum_{\k \in \N_0^n,\atop  |\k|\leq |\alphab|}
 \widetilde{c}_n(\b;\alphab, \k) ~X_1^{k_1} \dots X_n^{k_n}.
 \end{equation}

\ee
 
\begin{remark}\label{rem_cc}
The quantities $c_n(\b;\alphab, \k)$ and $\widetilde{c}_n(\b;\alphab, \k)$ appear also
in \cite{EM1}.   Obviously $\widetilde{c}_n(\b;\alphab, \k)=c_n(\b;\alphab, \k)\gamma_1^{k_1}\cdots\gamma_n^{k_n}$.
\end{remark}
 
\begin{theorem}\label{main2}
Let $n\geq 2$.
Let $\gammab =(\gamma_1,\dots, \gamma_n) \in \C^n$ with $\Re (\gamma_j) >0$
for all $j=1,\dots, n$.
Let $\b =(b_1,\dots, b_n) \in \C^n$, satisfying the conditions
\begin{align}\label{condition_b}
\left\{
\begin{array}{l}
\Re (b_j) > -\Re (\gamma_1)\; ({\rm for\;all}\;j=1,\ldots,n), \\ 
b_n-b_{n-1}\notin (-\infty,0], \quad (b_n-b_{n-1})/\gamma_n\notin(-\infty,0].
\end{array}\right.
\end{align}
(The latter two conditions mean that they are in the principal branch.)
Let $\mub_{n-1}=(\mu_1,\dots, \mu_{n-1}) \in \left(\mathbb{T}\setminus\{1\}\right)^{n-1}$.
Then, the series
$\zeta_{n,n-1}(\mathbf{s},\boldsymbol{\gamma},\mathbf{b},\boldsymbol{\mu}_{n-1})$ has meromorphic 
continuation to the whole space $\C^n$ and its possible poles are located only 
on the hyperplane 
$s_n=1$. 
Furthermore,  for any $\Nb =(N_1,\dots, N_n) \in \N_0^n$, we have
\begin{eqnarray}\label{main2formula}
&&\zeta_{n,n-1}(-\Nb,\boldsymbol{\gamma},\mathbf{b},\boldsymbol{\mu}_{n-1}) \\
&&=
-\frac{1}{N_n +1} \sum_{\k \in \N_0^{n-1} \atop 
|\k| \leq |\Nb_{n-1}^*(1)|} c_{n-1}(\b'_{n-1}; \Nb_{n-1}^*(1), \k) \gamma_n^{-1}  \prod_{j=1}^{n-1} \gamma_j^{k_j}\phi_{\mu_j} (-k_j)\nonumber\\
& & + \sum_{l=0}^{N_n} \binom{N_n}{l}\sum_{\k \in \N_0^{n-1} \atop 
|\k| \leq |\Nb_{n-1}^*(-l)|} c_{n-1}(\b'_{n-1}; \Nb_{n-1}^*(-l), \k) \gamma_n^l \left(\prod_{j=1}^{n-1} \gamma_j^{k_j}\phi_{\mu_j} (-k_j) \right) \nonumber\\
&&\qquad\times\zeta \left(-l; \frac{b_n-b_{n-1}}{\gamma_n}\right),
\nonumber 
\end{eqnarray}
where $\b'_{n-1}=(b_1,b_2',\ldots,b'_{n-1})$ with $b'_j=b_j-(\gamma_2+\cdots+\gamma_j)$
$(2\leq j\leq n-1)$ and
$c_{n-1}(\b'_{n-1}; \Nb_{n-1}^*(-l), \k)$ is defined as in \eqref{benoulliexpension}.
\end{theorem}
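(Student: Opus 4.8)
The plan is to use the Mellin--Barnes integral formula to peel off the innermost variable $m_n$ and thereby reduce $\zeta_{n,n-1}$ to the \emph{fully} twisted $(n-1)$-fold series, to which de Crisenoy's Proposition~\ref{prop_dC} applies. The point is that $m_n$ occurs only in the last linear form, which differs from the $(n-1)$-th one, $\gamma_1m_1+\cdots+\gamma_{n-1}m_{n-1}+b_{n-1}$, only by $\gamma_nm_n+(b_n-b_{n-1})$. Working in a suitable subdomain of $\mathcal D_n$ (so $\Re(s_n)>1$), I would write
$$
(\gamma_1m_1+\cdots+\gamma_nm_n+b_n)^{-s_n}=(\gamma_1m_1+\cdots+\gamma_{n-1}m_{n-1}+b_{n-1})^{-s_n}(1+w)^{-s_n},
$$
with $w=(\gamma_nm_n+b_n-b_{n-1})/(\gamma_1m_1+\cdots+\gamma_{n-1}m_{n-1}+b_{n-1})$, and apply $(1+w)^{-s}=\frac{1}{2\pi i}\int_{(c)}\frac{\Gamma(s+z)\Gamma(-z)}{\Gamma(s)}\,w^{z}\,dz$ along a vertical line $\Re(z)=c$ with $-\Re(s_n)<c<-1$; the hypotheses $\Re(\gamma_j)>0$, $\Re(b_j)>-\Re(\gamma_1)$ and \eqref{condition_b} are precisely what is needed to keep $w$, $\gamma_nm_n+b_n-b_{n-1}$ and $(b_n-b_{n-1})/\gamma_n$ in the principal branch. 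Inserting this into \eqref{1-1}, interchanging the $z$-integral with the $(m_1,\dots,m_n)$-summation (legitimate in a suitable subdomain by Stirling's formula together with absolute convergence), and summing first over $m_n\ge0$ — which gives $\gamma_n^{z}\,\zeta(-z,(b_n-b_{n-1})/\gamma_n)$, and is where $c<-1$ enters — the remaining sum over $m_1\ge1,\ m_2,\dots,m_{n-1}\ge0$ is exactly $\zeta_{n-1,n-1}((s_1,\dots,s_{n-2},s_{n-1}+s_n+z),\gammab_{n-1},\b_{n-1},\mub_{n-1})$, which is entire by Proposition~\ref{prop_dC}(i). This yields the representation
\begin{align*}
\zeta_{n,n-1}(\s,\gammab,\b,\mub_{n-1})=\frac{1}{2\pi i}\int_{(c)}\frac{\Gamma(s_n+z)\Gamma(-z)}{\Gamma(s_n)}\,\gamma_n^{z}\,\zeta\!\left(-z,\frac{b_n-b_{n-1}}{\gamma_n}\right)\\
\times\,\zeta_{n-1,n-1}\!\left((s_1,\dots,s_{n-2},s_{n-1}+s_n+z),\gammab_{n-1},\b_{n-1},\mub_{n-1}\right)dz .
\end{align*}

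Next I would move the contour to $\Re(z)=M$ with $M$ large and $M\notin\Z$. One crosses only the simple pole $z=-1$ of $\zeta(-z,\cdot)$ (residue $-1$) and the simple poles $z=0,1,\dots,N:=\lfloor M\rfloor$ of $\Gamma(-z)$; no pole of $\Gamma(s_n+z)$ is met as long as $\Re(s_n)>-c$. Collecting residues gives, for such $\s$,
\begin{align*}
\zeta_{n,n-1}(\s,\gammab,\b,\mub_{n-1})=\frac{1}{s_n-1}\,\gamma_n^{-1}\,\zeta_{n-1,n-1}\!\left((s_1,\dots,s_{n-2},s_{n-1}+s_n-1),\gammab_{n-1},\b_{n-1},\mub_{n-1}\right)\\
+\sum_{l=0}^{N}\frac{(-1)^{l}}{l!}\,\frac{\Gamma(s_n+l)}{\Gamma(s_n)}\,\gamma_n^{l}\,\zeta\!\left(-l,\frac{b_n-b_{n-1}}{\gamma_n}\right)\zeta_{n-1,n-1}\!\left((s_1,\dots,s_{n-2},s_{n-1}+s_n+l),\dots\right)\\
+\frac{1}{\Gamma(s_n)}\cdot\frac{1}{2\pi i}\int_{(M)}\Gamma(s_n+z)\Gamma(-z)\,\gamma_n^{z}\,\zeta\!\left(-z,\frac{b_n-b_{n-1}}{\gamma_n}\right)\zeta_{n-1,n-1}(\dots)\,dz .
\end{align*}
Since $\zeta_{n-1,n-1}$ is entire, $\Gamma(s_n+l)/\Gamma(s_n)$ is a polynomial in $s_n$, and the last integral is holomorphic in $\s$ on $\{\Re(s_n)>-M\}$, letting $M$ vary shows that this identity furnishes the meromorphic continuation of $\zeta_{n,n-1}$ to all of $\C^n$, the only possible pole being the hyperplane $s_n=1$ coming from the first term.

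Finally, I would specialize at $\s=-\Nb$. Then $1/\Gamma(s_n)$ vanishes at $s_n=-N_n$, so (choosing $M>N_n$, so that the last integral is holomorphic near $\s=-\Nb$) the integral term drops out; in the first term $1/(s_n-1)=-1/(N_n+1)$ and the argument of $\zeta_{n-1,n-1}$ becomes $-\Nb_{n-1}^*(1)$; and since $\Gamma(s_n+l)/\Gamma(s_n)|_{s_n=-N_n}=(-N_n)(-N_n+1)\cdots(-N_n+l-1)$ vanishes for $l>N_n$ and equals $(-1)^{l}\binom{N_n}{l}l!$ for $0\le l\le N_n$, only the terms $l=0,\dots,N_n$ survive, each with coefficient $\binom{N_n}{l}$ and with the argument of $\zeta_{n-1,n-1}$ equal to $-\Nb_{n-1}^*(-l)$. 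It then remains to evaluate each $\zeta_{n-1,n-1}$ at its non-positive integer point by Proposition~\ref{prop_dC}(ii); after the change of summation variables $m_j\mapsto m_j-1$ ($2\le j\le n-1$) that brings \eqref{1-1} into the range of \eqref{def_general}, each constant $b_j$ is replaced by $b'_j=b_j-(\gamma_2+\cdots+\gamma_j)$ and the relevant coefficients become $\widetilde c_{n-1}(\b'_{n-1};\alphab,\k)=c_{n-1}(\b'_{n-1};\alphab,\k)\gamma_1^{k_1}\cdots\gamma_{n-1}^{k_{n-1}}$ (Remark~\ref{rem_cc}) with $\alphab=\Nb_{n-1}^*(1)$, resp. $\Nb_{n-1}^*(-l)$; substituting $\zeta_{n-1,n-1}(-\alphab,\dots)=\sum_{\k}c_{n-1}(\b'_{n-1};\alphab,\k)\prod_{j=1}^{n-1}\gamma_j^{k_j}\phi_{\mu_j}(-k_j)$ yields \eqref{main2formula}.

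The main obstacle is analytic rather than combinatorial: one must justify the interchange of the multiple sum with the Mellin--Barnes integral and, above all, secure enough decay of the integrand in vertical strips — Stirling's bounds for the $\Gamma$-factors against the growth of the entire function $\zeta_{n-1,n-1}$ in vertical strips — to push the contour past the points $z=0,1,\dots,N$ and to conclude that the resulting representation is valid on all of $\C^n$. By contrast, the apparent collision at $\s=-\Nb$ of the poles $z=-s_n-\ell$ of $\Gamma(s_n+z)$ with the poles $z=\ell$ of $\Gamma(-z)$ causes no real difficulty, precisely because $s_n$ is kept generic until the last step: there the prefactor $1/\Gamma(s_n)$ simultaneously annihilates the leftover integral and, through the polynomial $\Gamma(s_n+l)/\Gamma(s_n)$, produces both the binomial coefficients $\binom{N_n}{l}$ and the truncation of the sum at $l=N_n$.
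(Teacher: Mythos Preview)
Your proposal is correct and follows essentially the same approach as the paper: apply the Mellin--Barnes formula to separate the $m_n$-summation, shift the contour to $\Re z=M+1/2$, collect residues at $z=-1$ and $z=0,1,\dots,M$, deduce meromorphic continuation with sole possible pole $s_n=1$, specialize at $\s=-\Nb$ (where $1/\Gamma(s_n)$ kills the remaining integral and truncates the sum), and finish with de Crisenoy's formula for the $\zeta_{n-1,n-1}$ values after the shift $m_j\mapsto m_j-1$. The paper makes explicit two small technicalities you pass over: it first imposes the stronger temporary hypothesis $\Re(b_n-b_{n-1})>0$ so that $|\arg w|<\pi$ is guaranteed for \emph{every} $m_n\ge0$, and only afterward relaxes to \eqref{condition_b} by analytic continuation in $\b$; and since Proposition~\ref{prop_dC} is formulated for polynomials with real coefficients (the HDF condition), it also temporarily assumes $\gamma_j,b_j\in\R$ before removing that restriction by analytic continuation.
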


\begin{remark}
Moreover, the special values of Hurwitz and Lerch zeta-functions appearing on the
right-hand side can be written down more explicitly.    In fact, it is well-known that
$\zeta(-n, a)=-B_{n+1}(a)/(n+1)$, where $B_{n+1}(a)$ denotes the Bernoulli polynomial
of order $n+1$.     As for $\phi_{\mu}(-n)$, we have
$$
\phi_{\mu}(-n)=\frac{(-1)^n\mu}{1-\mu}\sum_{\ell=0}^n \frac{\ell!S(n,\ell)}
{(\mu-1)^{\ell}},
$$
where $S(n,\ell)$ denotes the Stirling number of the second kind attached to
$(n,\ell)$ (see de Crisenoy \cite[Lemma 5.7]{dC}).
\end{remark}

\begin{theorem}\label{main3}
Let $n\geq 2$.
Let $\gammab =(\gamma_1,\dots, \gamma_n) \in \C^n$ 
with $\Re (\gamma_j) >0$ for all $j=1,\dots, n$, and
and  $\b =(b_1,\dots, b_n) \in \C^n$ satisfying \eqref{condition_b}.
Let $\mub_{n-2}=(\mu_1,\dots, \mu_{n-2}) \in \left(\mathbb{T}\setminus \{1\}\right)^{n-2}$.
Then, the series
$\zeta_{n,n-2}(\mathbf{s},\boldsymbol{\gamma},\mathbf{b},\boldsymbol{\mu}_{n-2})$ has meromorphic continuation to the whole space $\C^n$ and its possible singularities 
are located only on the hyperplanes
$$s_n=1 \quad {\mbox { and  }} \quad s_{n-1}+s_n= k ~~(k\in \Z, k\leq 2).$$
Furthermore,
for any $\Nb =(N_1,\dots, N_n) \in \N_0^n$, as $\deltab=(\delta_1,\dots, \delta_n)$ 
tends to
$(0,\dots, 0)$, we have
\begin{eqnarray*}
\lefteqn{\zeta_{n,n-2}(-\Nb+\deltab,\boldsymbol{\gamma},\mathbf{b},\boldsymbol{\mu}_{n-2})}\\ 
&&=-\frac{1}{N_n+1}\zeta_{n-1,n-2}(-\mathbf{N}_{n-1}^*(1),\boldsymbol{\gamma}_{n-1},\mathbf{b}_{n-1},
\boldsymbol{\mu}_{n-2})\gamma_n^{-1}\\
& &+\sum_{l=0}^{N_n}\binom{N_n}{l}\zeta_{n-1,n-2}(-\mathbf{N}_{n-1}^*(-l),\boldsymbol{\gamma}_{n-1},
\mathbf{b}_{n-1},\boldsymbol{\mu}_{n-2})
 ~\zeta\left(-l,\frac{b_n-b_{n-1}}{\gamma_n}\right)\gamma_n^l \\
&&+\frac{(-1)^{N_{n-1}+1} N_n! N_{n-1}!}{(N_{n-1}+N_n+1)!}~\left(\frac{\delta_n+O(\delta_n^2)}{\delta_{n-1}+\delta_n}\right)\\
&&\times\zeta_{n-2,n-2}(-\mathbf{N}_{n-2}
+\deltab_{n-2},\boldsymbol{\gamma}_{n-2},
\mathbf{b}_{n-2},\boldsymbol{\mu}_{n-2})\\
&&\times\zeta\left(-N_{n-1}-N_n-1,\frac{b_n-b_{n-1}}{\gamma_n}\right)\gamma_{n-1}^{-1}
\gamma_n^{N_{n-1}+N_n+1}\\
&&
+O\left(\max_{1\leq j\leq n}|\delta_j|\right),
\end{eqnarray*}
where, when $n=2$, we understand that $\zeta_{0,0}\equiv 1$.
\end{theorem}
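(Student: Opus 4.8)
The plan is to reduce the $k=n-2$ case to the $k=n-1$ case (Theorem~\ref{main2}) together with a single application of the Mellin--Barnes integral formula, exactly as Theorem~\ref{main2} itself is reduced to de Crisenoy's Proposition~\ref{prop_dC}. First I would split off the innermost summation variable $m_n$, writing the denominator factor $(\gamma_1 m_1+\cdots+\gamma_n m_n+b_n)^{s_n}$ in terms of $(\gamma_1 m_1+\cdots+\gamma_{n-1}m_{n-1}+b_{n-1})$ (the ``base'' linear form that already occurs in the $(n-1)$-st factor) plus the remainder $\gamma_n m_n + (b_n-b_{n-1})$. The conditions \eqref{condition_b} guarantee both quantities lie in the principal branch, so the Mellin--Barnes formula
$$
(A+B)^{-s}=\frac{1}{2\pi i}\int_{(c)}\frac{\Gamma(s+z)\Gamma(-z)}{\Gamma(s)}A^{-s-z}B^{z}\,dz
$$
applies with $A$ the base form and $B=\gamma_n m_n+(b_n-b_{n-1})$, valid for $-\Re(s)<c<0$. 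Carrying out the $m_n$-sum then produces a Hurwitz zeta-factor $\zeta(-z,(b_n-b_{n-1})/\gamma_n)\gamma_n^{z}$, and the remaining $(m_1,\dots,m_{n-1})$-sum is precisely $\zeta_{n-1,n-2}(s_1,\dots,s_{n-2},s_{n-1}+s_n+z;\boldsymbol{\gamma}_{n-1},\mathbf{b}_{n-1},\boldsymbol{\mu}_{n-2})$ — note the shift $b_j\mapsto b_j'$ is exactly the bookkeeping that moves the base form from ``$\gamma_1 m_1+\cdots+\gamma_{n-1}m_{n-1}+b_{n-1}$'' into the normalized shape of \eqref{1-1}. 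Thus
$$
\zeta_{n,n-2}(\mathbf{s})=\frac{1}{2\pi i}\int_{(c)}\frac{\Gamma(s_n+z)\Gamma(-z)}{\Gamma(s_n)}\,
\zeta_{n-1,n-2}\!\big(\dots,s_{n-1}+s_n+z\big)\,\zeta\!\Big(-z,\tfrac{b_n-b_{n-1}}{\gamma_n}\Big)\gamma_n^{z}\,dz,
$$
and Theorem~\ref{main2} already gives the meromorphic structure of the $\zeta_{n-1,n-2}$ factor (a single pole on its last-variable-$=1$ hyperplane), while $\zeta(-z,\cdot)$ contributes a pole at $z=1$ and $\Gamma(s_n+z)$ gives poles at $z=-s_n,-s_n-1,\dots$, and $\Gamma(-z)$ at $z=0,1,2,\dots$.

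Next I would push the contour to the right, collecting residues at $z=0,1,2,\dots$ from $\Gamma(-z)$ and at the pole $z=1-(s_{n-1}+s_n)$ coming from the $\zeta_{n-1,n-2}$ factor, to obtain the analytic continuation of $\zeta_{n,n-2}(\mathbf{s})$ to the region $\Re(z)$ can reach, uniformly in $\mathbf{s}$ on compacta avoiding the stated hyperplanes. The residue at $z=0$ gives the term $\sum_{l}\binom{N_n}{l}\zeta_{n-1,n-2}(-\mathbf{N}_{n-1}^*(-l),\dots)\zeta(-l,\cdot)\gamma_n^{l}$ after one expands $\Gamma(s_n+z)/\Gamma(s_n)$ at $z=0$ and recalls $s_n=-N_n$ so that its Taylor coefficients are the binomial numbers $\binom{N_n}{l}$ times $\zeta$-values at $-l$; the residue from the moving pole of $\zeta_{n-1,n-2}$ at $z=1-(s_{n-1}+s_n)$, evaluated near $\mathbf{s}=-\mathbf{N}$, produces the third (singular, $\delta$-dependent) term — here $s_{n-1}+s_n\to-(N_{n-1}+N_n)$, so the relevant residue sits at $z\to N_{n-1}+N_n+1$, and the factor $\delta_n/(\delta_{n-1}+\delta_n)$ arises because the residue of $\zeta_{n-1,n-2}$ at its pole is itself (by Theorem~\ref{main2}) proportional to $1/(N_{n-1}+N_n+1)\cdot\zeta_{n-2,n-2}(\dots)$ while $\Gamma(s_n+z)$ evaluated there contributes the $O(\delta_n)$ numerator and the remaining $\delta$-denominator is the distance from $z$ to the pole in terms of the perturbation $\delta_{n-1}+\delta_n$. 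The residue at $z=1$ from $\zeta(-z,\cdot)$ contributes the pole of $\zeta_{n,n-2}$ on $s_n=1$ (it drops out at $\mathbf{s}=-\mathbf{N}$ since then $s_n=-N_n\neq1$), and the remaining shifted integral, over $\Re(z)=c'$ large, is holomorphic near $\mathbf{s}=-\mathbf{N}$ and its value there is $O(\max_j|\delta_j|)$ by the Gamma-factor $1/\Gamma(s_n)=1/\Gamma(-N_n+\delta_n)$ which vanishes to first order in $\delta_n$ — this is the source of the final $O(\max|\delta_j|)$ error and the reason only finitely many residues survive.

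The main obstacle will be the careful bookkeeping of the \emph{double} limit $\deltab\to\zerob$ near the intersection locus $s_{n-1}+s_n\in\Z_{\le2}$: the moving pole of the $\zeta_{n-1,n-2}$ factor and the fixed poles of $\Gamma(s_n+z)$ can collide when both $s_n$ and $s_{n-1}+s_n$ approach non-positive integers simultaneously, so the residue computation at $z=N_{n-1}+N_n+1$ must be done keeping $\deltab$ infinitesimal and tracking which factors are genuinely singular versus merely small; this is precisely what forces the asymmetric ratio $\big(\delta_n+O(\delta_n^2)\big)/(\delta_{n-1}+\delta_n)$ rather than a clean finite value, and getting the constant $(-1)^{N_{n-1}+1}N_n!N_{n-1}!/(N_{n-1}+N_n+1)!$ right requires combining the residue of $\zeta_{n-1,n-2}$ from Theorem~\ref{main2} (whose leading behaviour near its pole I would extract as a Laurent coefficient in the auxiliary variable) with the Gamma-function residue $\mathrm{Res}_{z=N_{n-1}+N_n+1}\Gamma(-z)=(-1)^{N_{n-1}+N_n}/(N_{n-1}+N_n+1)!$ and the value $\Gamma(s_n+z)/\Gamma(s_n)\big|$ there, which supplies the $N_n!$. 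A secondary technical point is justifying the contour shift, i.e.\ the decay of the integrand as $|\Im z|\to\infty$ on vertical lines: this follows from Stirling's formula for the Gamma-factors (whose combined order is $\exp(-\pi|\Im z|)\cdot|\Im z|^{\Re s_n-1}$) together with polynomial growth of $\zeta_{n-1,n-2}$ and $\zeta(\cdot,\cdot)$ in vertical strips, the latter being available from the meromorphic-continuation results quoted from \cite{FKMT-Bessatsu} and from standard convexity bounds for the Hurwitz zeta-function, and I would state this as a lemma before beginning the residue calculus. Finally, when $n=2$ the inner $(m_1,\dots,m_{n-2})$-sum is empty and the convention $\zeta_{0,0}\equiv1$ makes the formula degenerate correctly — de Crisenoy's Proposition~\ref{prop_dC} with no twist is not even needed, only the Hurwitz zeta-factor survives — so the base case is immediate and the induction on $n$ (or rather the one-step reduction to Theorem~\ref{main2}) closes.
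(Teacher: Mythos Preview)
Your overall strategy---Mellin--Barnes in the last variable, rightward contour shift, residue bookkeeping---is exactly the paper's. But several of your pole identifications are wrong and would derail the computation if followed literally. The Hurwitz factor $\zeta(-z,(b_n-b_{n-1})/\gamma_n)$ has its pole at $z=-1$, not $z=1$; since the initial contour lies at $\Re z=c$ with $-\Re s_n<c<-1$, the rightward shift does cross it, contributing $\tfrac{1}{s_n-1}\zeta_{n-1,n-2}(\mathbf{s}_{n-1}^*(-1),\ldots)\gamma_n^{-1}$, which at $s_n=-N_n$ is precisely the first term $-\tfrac{1}{N_n+1}\zeta_{n-1,n-2}(-\mathbf{N}_{n-1}^*(1),\ldots)\gamma_n^{-1}$. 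It does not drop out. Likewise the finite sum $\sum_l\binom{N_n}{l}(\cdots)$ does not come from ``expanding at $z=0$'': each summand is a separate residue of $\Gamma(-z)$ at $z=l$, with $\binom{-s_n}{l}$ arising from the Gamma ratio evaluated there.

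More importantly, the moving pole $z=1-(s_{n-1}+s_n)$ of $\zeta_{n-1,n-2}$ is \emph{never} crossed. The shift is carried out while $\mathbf{s}\in\mathcal{A}_n$, where $\Re(1-s_{n-1}-s_n)<-\Re s_n<c$, so that pole sits to the left of the original contour and stays there; one then continues the resulting finite-sum-plus-integral expression in $\mathbf{s}$. The singular $\delta$-dependent term arises instead from the single summand $l=N_{n-1}+N_n+1$ in the $\Gamma(-z)$-residue sum: there $\zeta_{n-1,n-2}$ is evaluated with last argument $s_{n-1}+s_n+l\to 1$, hitting its own pole as $\boldsymbol{\delta}\to\zerob$, while the coefficient $\binom{-s_n}{l}=\binom{N_n-\delta_n}{N_{n-1}+N_n+1}$ vanishes to first order in $\delta_n$ (since $l>N_n$). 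The relevant ``collision'' is therefore between a zero of the binomial coefficient and a pole of $\zeta_{n-1,n-2}$ in the $\mathbf{s}$-variables, not between $z$-plane poles of $\Gamma(s_n+z)$ and the moving pole. Feeding in the Laurent expansion of $\zeta_{n-1,n-2}$ near its pole (principal part $\tfrac{1}{\delta_{n-1}+\delta_n}\zeta_{n-2,n-2}(-\mathbf{N}_{n-2}+\boldsymbol{\delta}_{n-2},\ldots)\gamma_{n-1}^{-1}$, obtained from the $k=n-1$ analysis) and expanding the product $(N_n-\delta_n)(N_n-1-\delta_n)\cdots(-\delta_n)\cdots(-N_{n-1}-\delta_n)/(N_{n-1}+N_n+1)!$ then delivers both the ratio $(\delta_n+O(\delta_n^2))/(\delta_{n-1}+\delta_n)$ and the combinatorial constant you quote.
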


\begin{remark}
When $n=2$, Theorem \ref{main3} gives the result on the (non-twisted) double zeta-function,
which coincides with \cite[Corollary 5.2]{MOW}.
\end{remark}

As a corollary, we obtain the following result: 
\begin{coro}
Assume that the assumptions of Theorem \ref{main3} hold. 
Let $\Nb =(N_1,\dots, N_n) \in \N_0^n$ and $\theta \in \C$.
Then, the limit 
$$\zeta_{n,n-2}^{\theta}(-\Nb,\boldsymbol{\gamma},\mathbf{b},\boldsymbol{\mu}_{n-2}):= \lim_{\deltab \rightarrow \zerob,~ \frac{\delta_n}{\delta_{n-1}+\delta_n}\rightarrow \theta} \zeta_{n,n-2}(-\Nb+\deltab,\boldsymbol{\gamma},\mathbf{b},\boldsymbol{\mu}_{n-2})$$
exists and is given by 
\begin{eqnarray*}
\lefteqn{\zeta_{n,n-2}^{\theta}(-\Nb,\boldsymbol{\gamma},\mathbf{b},\boldsymbol{\mu}_{n-2})}\\
&&= 
-\frac{1}{N_n+1}\zeta_{n-1,n-2}(-\mathbf{N}_{n-1}^*(1),\boldsymbol{\gamma}_{n-1},\mathbf{b}_{n-1},
\boldsymbol{\mu}_{n-2})\gamma_n^{-1}\\
& &+\sum_{l=0}^{N_n}\binom{N_n}{l}\zeta_{n-1,n-2}(-\mathbf{N}_{n-1}^*(-l),\boldsymbol{\gamma}_{n-1},
\mathbf{b}_{n-1},\boldsymbol{\mu}_{n-2})
 ~\zeta\left(-l,\frac{b_n-b_{n-1}}{\gamma_n}\right)\gamma_n^l \\
&&+\frac{(-1)^{N_{n-1}+1} N_n! N_{n-1}!}{(N_{n-1}+N_n+1)!}~
\zeta_{n-2,n-2}(-\mathbf{N}_{n-2},\boldsymbol{\gamma}_{n-2},
\mathbf{b}_{n-2},\boldsymbol{\mu}_{n-2}) ~\theta\\
&&\times\zeta\left(-N_{n-1}-N_n-1,\frac{b_n-b_{n-1}}{\gamma_n}\right)\gamma_{n-1}^{-1}
\gamma_n^{N_{n-1}+N_n+1}.
\end{eqnarray*}
Moreover, on the right-hand side, we may apply Theorem \ref{main2} to the 
$\zeta_{n-1,n-2}$ factors and Proposition \ref{prop_dC} to the $\zeta_{n-2,n-2}$
factor, to obtain a more explicit expression of
$\zeta_{n,n-2}^{\theta}(-\Nb,\boldsymbol{\gamma},\mathbf{b},\boldsymbol{\mu}_{n-2})$.
\end{coro}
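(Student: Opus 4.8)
The plan is to obtain the corollary directly from Theorem~\ref{main3} by passing to the constrained limit term by term in the displayed asymptotic expansion. First I would record that, in that expansion, the first two contributions
$$-\tfrac{1}{N_n+1}\zeta_{n-1,n-2}(-\mathbf{N}_{n-1}^*(1),\boldsymbol{\gamma}_{n-1},\mathbf{b}_{n-1},\boldsymbol{\mu}_{n-2})\gamma_n^{-1}$$
and $\sum_{l=0}^{N_n}\binom{N_n}{l}\zeta_{n-1,n-2}(-\mathbf{N}_{n-1}^*(-l),\boldsymbol{\gamma}_{n-1},\mathbf{b}_{n-1},\boldsymbol{\mu}_{n-2})\,\zeta(-l,(b_n-b_{n-1})/\gamma_n)\gamma_n^l$ do not involve $\deltab$ at all, and therefore survive the limit unchanged; these become the first two lines of the asserted formula. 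The final error term $O(\max_{1\le j\le n}|\delta_j|)$ plainly tends to $0$ as $\deltab\to\zerob$.

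It remains to treat the ``singular'' term, which is the product of the constants $\tfrac{(-1)^{N_{n-1}+1}N_n!N_{n-1}!}{(N_{n-1}+N_n+1)!}$, $\gamma_{n-1}^{-1}\gamma_n^{N_{n-1}+N_n+1}$ and $\zeta(-N_{n-1}-N_n-1,(b_n-b_{n-1})/\gamma_n)$ with the two $\deltab$-dependent factors $\bigl(\delta_n+O(\delta_n^2)\bigr)/(\delta_{n-1}+\delta_n)$ and $\zeta_{n-2,n-2}(-\mathbf{N}_{n-2}+\deltab_{n-2},\boldsymbol{\gamma}_{n-2},\mathbf{b}_{n-2},\boldsymbol{\mu}_{n-2})$. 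For the first factor I would split $\bigl(\delta_n+O(\delta_n^2)\bigr)/(\delta_{n-1}+\delta_n)=\delta_n/(\delta_{n-1}+\delta_n)+\bigl(\delta_n/(\delta_{n-1}+\delta_n)\bigr)O(\delta_n)$; along the prescribed path $\delta_n/(\delta_{n-1}+\delta_n)\to\theta$, so this ratio is bounded near the limit, hence the second summand is $O(\delta_n)\to 0$ and the factor tends to $\theta$. For the second factor I would invoke Proposition~\ref{prop_dC}: the fully twisted linear form $\zeta_{n-2,n-2}$ has denominators satisfying the condition HDF (as already used in the proof of Theorem~\ref{main3}), hence $\zeta_{n-2,n-2}(\cdot,\boldsymbol{\gamma}_{n-2},\mathbf{b}_{n-2},\boldsymbol{\mu}_{n-2})$ is entire, in particular continuous at $-\mathbf{N}_{n-2}$, so $\zeta_{n-2,n-2}(-\mathbf{N}_{n-2}+\deltab_{n-2},\dots)\to\zeta_{n-2,n-2}(-\mathbf{N}_{n-2},\dots)$ as $\deltab_{n-2}\to\zerob$ (and this factor is $1$ when $n=2$, by convention). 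Since a product of convergent sequences converges to the product of the limits, the singular term converges to $\tfrac{(-1)^{N_{n-1}+1}N_n!N_{n-1}!}{(N_{n-1}+N_n+1)!}\,\zeta_{n-2,n-2}(-\mathbf{N}_{n-2},\boldsymbol{\gamma}_{n-2},\mathbf{b}_{n-2},\boldsymbol{\mu}_{n-2})\,\theta\,\zeta(-N_{n-1}-N_n-1,(b_n-b_{n-1})/\gamma_n)\gamma_{n-1}^{-1}\gamma_n^{N_{n-1}+N_n+1}$, which is exactly the third block of the claimed formula; in particular the limit exists.

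Finally, for the closing remark I would only observe that each of $\zeta_{n-1,n-2}(-\mathbf{N}_{n-1}^*(1),\boldsymbol{\gamma}_{n-1},\mathbf{b}_{n-1},\boldsymbol{\mu}_{n-2})$ and $\zeta_{n-1,n-2}(-\mathbf{N}_{n-1}^*(-l),\boldsymbol{\gamma}_{n-1},\mathbf{b}_{n-1},\boldsymbol{\mu}_{n-2})$ is a value, at a non-positive integer point, of a partially twisted generalized Euler--Zagier zeta-function in $n-1$ variables with $n-2=(n-1)-1$ twists, so that Theorem~\ref{main2} applies to it verbatim; similarly $\zeta_{n-2,n-2}(-\mathbf{N}_{n-2},\dots)$ is evaluated by formula~\eqref{formula_dC} of Proposition~\ref{prop_dC}. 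Substituting these expressions into the displayed formula yields the promised fully explicit result. There is essentially no obstacle of substance here, the corollary being a formal consequence of Theorem~\ref{main3}; the only point requiring a little care is the vanishing of the $O(\delta_n^2)/(\delta_{n-1}+\delta_n)$ contribution, for which the boundedness of $\delta_n/(\delta_{n-1}+\delta_n)$ along the constrained path --- guaranteed precisely by its convergence to $\theta$ --- is what is used.
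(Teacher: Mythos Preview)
Your proposal is correct and is exactly the intended argument: the paper presents this statement as an immediate corollary of Theorem~\ref{main3} with no separate proof, and your term-by-term passage to the constrained limit (including the handling of the $O(\delta_n^2)/(\delta_{n-1}+\delta_n)$ contribution via boundedness of $\delta_n/(\delta_{n-1}+\delta_n)$ along the path) is precisely what is implicit there.
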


\bigskip

The argument to prove Theorem \ref{main3} can be extended to the case $k\leq n-3$, to obtain the same type of 
explicit formulas.     However, for smaller values of $k$, more and more relevant singularities 
will appear, so the description of indeterminacy will be much more complicated.\par
%Let $\mathbf{h}=(h_1,\ldots,h_n)\in\mathbb{N}^n$, and consider
%$$
%\zeta_{n,k}(\mathbf{s},\mathbf{h},\boldsymbol{\gamma},\mathbf{b},\boldsymbol{\mu}_k)=
%\sum_{\stackrel{m_1\geq 1}{m_2,\ldots,m_n\geq 0}}\frac{\prod_{j=1}^k \mu_j^{m_j}}
%{\prod_{j=1}^n (\gamma_1 m_1^{h_1}+\cdots+\gamma_j m_j^{h_j}+b_j)^{s_j}}.
%$$
%This is obviously a generalization of
%$\zeta_{n,k}(\mathbf{s},\boldsymbol{\gamma},\mathbf{b},\boldsymbol{\mu}_k)$, and can be
%treated (see Section 5) in a quite similar way as in the linear case in Theorems %\ref{main2} and \ref{main3}. 

%More generally, we can consider the multiple Dirichlet series of the form 
%\begin{equation}\label{general}
%Z_{n,k}(\mathbf{s}, {\mathbf P},\boldsymbol{\mu}_k)=
%\sum_{m_1,\ldots,m_n\geq 1}\frac{\prod_{j=1}^k \mu_j^{m_j}}
%{\prod_{j=1}^n P_j(m_1,\dots, m_j)^{s_j}},
%\end{equation}
%where the $P_j$ are polynomials. It follows from the method of \cite{essouabriThesis} (see also \cite{essouabriFourier}) that these series have meromorphic continuation to $\C^n$ for 
%fairly general class of polynomials $P_j$. 
%However, the set of singularities is not easy to determine in this general setting. 
%In our forthcoming work  \cite{GMZV2}, we will
%overcome this problem and obtain some partial extensions of our results in the present paper to  
%$Z_{n,k}(\mathbf{s},\P,\boldsymbol{\mu}_k)$.

By the method in the present paper, it is possible to study the behavior of 
multiple zeta-functions of
more general form \eqref{twistedpolyzeta}, whose denominators are not
necessarily linear forms.  
The general treatment will be developed in our next paper \cite{EMtwisted2},  
but here, we discuss the following special type of non-linear forms.

Let $\mathbf{h}=(h_1,\ldots,h_n)\in\mathbb{N}^n$, and define
\begin{align}\label{2-1}
\zeta_{n,k}(\mathbf{s},\mathbf{h},\boldsymbol{\gamma},\mathbf{b},\boldsymbol{\mu}_k)=
\sum_{m_1,\ldots,m_n\geq 1}\frac{\prod_{j=1}^k \mu_j^{m_j}}
{\prod_{j=1}^n (\gamma_1 m_1^{h_1}+\cdots+\gamma_j m_j^{h_j}+b_j)^{s_j}}.
\end{align}
Analogous to \eqref{def-widetilde-c}, we define
$\widetilde{c}_n(\b;\mathbf{h},\alphab, \k)$ by
\begin{equation}\label{def-widetilde-c2}
\prod_{j=1}^n (\sum_{i=1}^j \gamma_iX_i^{h_i} +b_j)^{\alpha_j}%= \sum_{\k \in \N_0^n,\atop  |\k|\leq |\alphab|} \widetilde{c}_n(\b;\alphab, \k) ~\X^\k 
 = \sum_{\k \in \N_0^n}
 \widetilde{c}_n(\b;\mathbf{h},\alphab, \k) ~X_1^{k_1} \dots X_n^{k_n}.
 \end{equation}
Note that the sum on the right-hand side is actually a finite sum.
As in Section \ref{sec2}, we denote by $S(\boldsymbol{\alpha})$ the set of all
$\k$ such that $\widetilde{c}_n(\b;\mathbf{h},\alphab, \k)\neq 0$.
Using this notation, we can formulate our third main result as follows.

\begin{theorem}\label{main_powercase}
Under the same assumptions as in Theorem \ref{main2}, we have
\begin{align}\label{formula_powercase}
&\zeta_{n,n-1}(-\mathbf{N},\mathbf{h},\boldsymbol{\gamma},\mathbf{b},\boldsymbol{\mu}_{n-1})\\
&= -\frac{\delta_{1,h_n}}{N_n+1}\sum_{\k\in S(\mathbf{N}_{n-1}^*(1))}
\widetilde{c}_{n-1}(\b_{n-1};\mathbf{h}_{n-1},\mathbf{N}_{n-1}^*(1), \k)
\gamma_n^{-1}\prod_{j=1}^{n-1}\phi_{\mu_j}(-k_j)\notag\\
&+\sum_{l=0}^{N_n}\binom{N_n}{l}
\sum_{\k\in S(\mathbf{N}_{n-1}^*(-l))}
\widetilde{c}_{n-1}(\b_{n-1};\mathbf{h}_{n-1},\mathbf{N}_{n-1}^*(-l), \k)
\gamma_n^{l}\left(\prod_{j=1}^{n-1}\phi_{\mu_j}(-k_j)\right)
\notag\\
&\qquad\times\zeta\left(-l,h_n,\frac{b_n-b_{n-1}}{\gamma_n}\right),\notag
\end{align}
where $\delta_{1,h_n}$ denotes the Kronecker delta.
\end{theorem}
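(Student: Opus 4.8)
**The plan is to mimic the proof of Theorem \ref{main2}, which is the $h_1=\cdots=h_n=1$ special case, replacing the linear form in the last variable $m_n$ by the power $\gamma_n m_n^{h_n}$ and tracking how this modifies the Mellin--Barnes step.** First I would isolate the last variable: write
$\gamma_1 m_1^{h_1}+\cdots+\gamma_n m_n^{h_n}+b_n = (\gamma_1 m_1^{h_1}+\cdots+\gamma_{n-1}m_{n-1}^{h_{n-1}}+b_{n-1}) + (\gamma_n m_n^{h_n}+b_n-b_{n-1})$, and set $A=A(m_1,\ldots,m_{n-1})=\gamma_1 m_1^{h_1}+\cdots+\gamma_{n-1}m_{n-1}^{h_{n-1}}+b_{n-1}$. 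Applying the Mellin--Barnes integral formula to $(A+\gamma_n m_n^{h_n}+b_n-b_{n-1})^{-s_n}$ splits the $n$-fold sum into the $(n-1)$-fold sum over $m_1,\ldots,m_{n-1}$ (with the numerator $\prod_{j=1}^{n-1}\mu_j^{m_j}$ intact, which is exactly the fully twisted situation covered by Proposition \ref{prop_dC}, since there are now only $n-1$ free variables and all $n-1$ roots of unity are $\neq 1$) times the one-variable series $\sum_{m_n\geq 1}(\gamma_n m_n^{h_n}+b_n-b_{n-1})^{-w}$. This last series is, up to the factor $\gamma_n^{-w}$, the generalized Hurwitz-type zeta-function $\zeta(w,h_n,(b_n-b_{n-1})/\gamma_n):=\sum_{m\geq 1}(m^{h_n}+c)^{-w}$ with $c=(b_n-b_{n-1})/\gamma_n$; its analytic properties (meromorphic continuation, location of its single pole, and values at non-positive integers) are standard and I would quote them.

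The key structural steps, in order: (1) justify the Mellin--Barnes representation in the region of absolute convergence and deform the contour past the poles of $\Gamma(w)$ and of $\Gamma(s_n-w)$; (2) identify the residual terms: the pole of $\zeta(w,h_n,c)$ (located where $w=1/h_n$ in the natural normalization, or $w=1$ if we keep the $m_n^{h_n}$ written as above) contributes the ``main'' term, while the poles of $\Gamma(w)$ at $w=0,-1,-2,\ldots$ — picked up only when the shifted zeta-function is evaluated at those non-positive integers — produce the sum over $l$ with the binomial coefficients $\binom{N_n}{l}$ after expanding $A^{-(s_n-w)}$ by the binomial theorem and specializing $s_n=-N_n$; (3) at $\s=-\Nb$, the $(n-1)$-fold factor becomes $\zeta_{n-1,n-1}$-type evaluated at a non-positive integer point, to which Proposition \ref{prop_dC} applies, giving $\prod_{j=1}^{n-1}\phi_{\mu_j}(-k_j)$ against the coefficients $\widetilde{c}_{n-1}(\b_{n-1};\mathbf{h}_{n-1},\mathbf{N}_{n-1}^*(\cdot),\k)$; (4) collect the pieces into \eqref{formula_powercase}.

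The one genuinely new feature, and where I expect the main subtlety, is the factor $\delta_{1,h_n}$ in front of the first term: when $h_n\geq 2$ the generalized Hurwitz zeta $\sum_{m\geq 1}(\gamma_n m^{h_n}+b_n-b_{n-1})^{-w}$ has its pole at a non-integer point $w=1/h_n$ rather than at $w=1$, so the residue term that in Theorem \ref{main2} produced the $-1/(N_n+1)$ contribution with argument $\Nb_{n-1}^*(1)$ (coming from the binomial expansion term $A^{-(s_n-w)}$ with $s_n-w=-N_n-1$, i.e. $w=1$) is simply absent; it survives only when $h_n=1$, which the Kronecker delta records. I would therefore carefully separate the contour-shift bookkeeping into the case $h_n=1$ (where the pole of the Hurwitz factor and a pole of $\Gamma(w)$ or the binomial term collide at an integer) and $h_n\geq 2$ (where they are disjoint), and check that in the latter case no extra residue is produced near $w=1$. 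The remaining work — verifying absolute convergence of the shifted contour integral, confirming that the only surviving singularity of $\zeta_{n,n-1}(\mathbf{s},\mathbf{h},\ldots)$ is still $s_n=1$ regardless of $h_n$, and matching constants — is routine and parallels Section \ref{sec4}.
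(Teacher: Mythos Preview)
Your overall strategy is exactly that of the paper's Section~\ref{sec6}: apply the Mellin--Barnes formula to separate the $m_n$-sum, obtain an integral representation involving $\zeta_{n-1,n-1}(\cdot,\mathbf{h}_{n-1},\ldots)$ and the generalized Hurwitz-type function $\zeta(-z,h_n,(b_n-b_{n-1})/\gamma_n)$, shift the contour, specialize to $\s=-\mathbf{N}$, and invoke Proposition~\ref{prop_dC} on the fully twisted $(n-1)$-fold factor.

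There is, however, a genuine gap in your account of the pole structure of the one-variable factor $\zeta(s,h,b)=\sum_{m\geq 0}(m^{h}+b)^{-s}$. You assert it has a ``single pole'' at $s=1/h$ and conclude that for $h_n\geq 2$ the contour shift produces ``no extra residue''. In fact (this is Lemma~\ref{lem_h} in the paper), for $h\geq 2$ the function $\zeta(s,h,b)$ has \emph{infinitely many} simple poles, at $s=h^{-1}-l$ for every $l\in\N_0$. Consequently, when you shift the contour from $\Re z=c<-1$ to the right, you cross not only the poles of $\Gamma(-z)$ at $z=0,1,\ldots,M$ but also the poles of $\zeta(-z,h_n,\cdot)$ at $z=l-h_n^{-1}$ for $l=0,1,\ldots,M$, each contributing a residue of the shape
\[
\frac{\Gamma(s_n+l-h_n^{-1})\,\Gamma(-l+h_n^{-1})}{\Gamma(s_n)}\,
\zeta_{n-1,n-1}\bigl(\mathbf{s}_{n-1}^*(l-h_n^{-1}),\ldots\bigr)\cdot(\text{constant}).
\]
So your explanation of the Kronecker delta $\delta_{1,h_n}$ is incomplete: it is not that these residues are absent, but that each carries the factor $1/\Gamma(s_n)$, and since $l-h_n^{-1}\notin\Z$ the numerator $\Gamma(s_n+l-h_n^{-1})$ remains finite at $s_n=-N_n$ while $\Gamma(-N_n)^{-1}=0$. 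This is the same mechanism that kills the remainder integral, and the paper disposes of both in one line. (When $h_n=1$, the unique pole sits at $z=-1$ and the residue involves $\Gamma(s_n-1)/\Gamma(s_n)=1/(s_n-1)$, which has no $\Gamma$-pole to cancel it---hence the surviving $-1/(N_n+1)$ term.) A related consequence: your claim that ``the only surviving singularity of $\zeta_{n,n-1}(\mathbf{s},\mathbf{h},\ldots)$ is still $s_n=1$ regardless of $h_n$'' is not correct for $h_n\geq 2$; the extra residue terms yield poles at $s_n=h_n^{-1}-m$, $m\in\N_0$. None of these lie at non-positive integers, though, so the evaluation at $-\mathbf{N}$ is unaffected.
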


%%%%%%%%%%%%%%%%%%%%%%%%%%%%%%%%%%%%%%%%%%%%%%%%%%%%%%%%%%%%%%%%%%%%%%%%%%%%%%%%%%%%%%%
\section{Proof of Theorem \ref{main2}}\label{sec4}
%%%%%%%%%%%%%%%%%%%%%%%%%%%%%%%%%%%%%%%%%%%%%%%%%%%%%%%%%%%%%%%%%%%%%%%%%%%%%%%%%%%%%%%%

Now we start the proof of Theorem \ref{main2}.

Let $n\geq 2$, and
fix $\gammab =(\gamma_1,\dots, \gamma_n) \in \C^n$ and  $\b =(b_1,\dots, b_n) \in \C^n$  such that 
$\Re (\gamma_j) >0$ and $\Re (b_j) > -\Re (\gamma_1)$ for all $j=1,\dots, n$ and 
$(b_n-b_{n-1})/\gamma_n\notin(-\infty,0]$.
Fix also  $\mub_{n-1}=(\mu_1,\dots, \mu_{n-1}) \in \left(\mathbb{T}\setminus \{1\}\right)^{n-1}$.

The zeta function 
$$\zeta_{n,n-1}(\mathbf{s},\boldsymbol{\gamma},\mathbf{b},\boldsymbol{\mu}_{n-1})=
\sum_{\stackrel{m_1\geq 1}{m_2,\ldots,m_n\geq 0}}\frac{\prod_{j=1}^{n-1} \mu_j^{m_j}}
{\prod_{j=1}^n (\gamma_1 m_1+\cdots+\gamma_j m_j+b_j)^{s_j}}
$$
is absolutely convergent (see \cite{MatIllinois}) in the region
$\mathcal{D}_n$,
hence especially in its subregion
$$
\mathcal{A}_n=\{\mathbf{s}\in\mathbb{C}^n\;|\;
\Re s_j>1 \;(1\leq j\leq n)\}.
$$

Recall the Mellin-Barnes integral formula:
\begin{align}\label{1-2}
(1+\lambda)^{-s}=\frac{1}{2\pi i}\int_{(c)}\frac{\Gamma(s+z)\Gamma(-z)}{\Gamma(s)}
\lambda^z dz,
\end{align}
where $s,\lambda\in\mathbb{C}$, $\Re s>0$, $\lambda\neq 0$, $|\arg\lambda|<\pi$
(the principal branch),
$-\Re s<c<0$, and the path of the integral is the vertical line $\Re z=c$
(see \cite{WW}).

Here we assume temporarily that $\s\in\mathcal{A}_n$ and
\begin{align}\label{1-3} 
\Re(b_n-b_{n-1})>0.
\end{align}
Our starting point is the decomposition
\begin{align}\label{1-4}
&(\gamma_1 m_1+\cdots+\gamma_n m_n+b_n)^{-s_n}\\
&\quad=(\gamma_1 m_1+\cdots+\gamma_{n-1} m_{n-1}+b_{n-1})^{-s_n}\notag\\
&\qquad\times\left(1+\frac{\gamma_n m_n+b_n-b_{n-1}}{\gamma_1 m_1+\cdots+\gamma_{n-1} m_{n-1}+b_{n-1}}
\right)^{-s_n}.\notag
\end{align}
Under the assumption \eqref{1-3} we see that
$$
\left|\arg\left(\frac{\gamma_n m_n+b_n-b_{n-1}}{\gamma_1 m_1+\cdots+\gamma_{n-1} m_{n-1}+b_{n-1}}
\right)\right|<\pi,
$$
hence the above decomposition \eqref{1-4} is valid, and using \eqref{1-2} we obtain
\begin{align}\label{1-5}
&(\gamma_1 m_1+\cdots+\gamma_n m_n+b_n)^{-s_n}\\
&=(\gamma_1 m_1+\cdots+\gamma_{n-1} m_{n-1}+b_{n-1})^{-s_n}\notag\\
&\times
\frac{1}{2\pi i}\int_{(c)}\frac{\Gamma(s_n+z)\Gamma(-z)}{\Gamma(s_n)}
\left(\frac{\gamma_n m_n+b_n-b_{n-1}}{\gamma_1 m_1+\cdots+\gamma_{n-1} m_{n-1}+b_{n-1}}
\right)^z dz,\notag
\end{align}
where $-\Re s_n<c<0$.    But since $\mathbf{s}\in\mathcal{A}_n$, we have
$\Re s_n>1$, so we may assume (more strongly)
\begin{align}\label{1-6}
-\Re s_n<c<-1.
\end{align}
Substituting \eqref{1-5} into \eqref{1-1} (with $k=n-1$) and changing the order of integration 
and summation, we have
\begin{align}\label{1-7}
&\zeta_{n,n-1}(\mathbf{s},\boldsymbol{\gamma},\mathbf{b},\boldsymbol{\mu}_{n-1})\\
&=\frac{1}{2\pi i}\int_{(c)}\frac{\Gamma(s_n+z)\Gamma(-z)}{\Gamma(s_n)}
\sum_{\stackrel{m_1\geq 1}{m_2,\ldots,m_n\geq 0}}\frac{\prod_{l=1}^{n-1} \mu_l^{m_l}}
{\prod_{j=1}^{n-2} (\gamma_1 m_1+\cdots+\gamma_j m_j+b_j)^{s_j}}\notag\\
&\times(\gamma_1 m_1+\cdots+\gamma_{n-1} m_{n-1}+b_{n-1})^{-s_{n-1}-s_n-z}
(\gamma_n m_n+b_n-b_{n-1})^z dz\notag\\
&=\frac{1}{2\pi i}\int_{(c)}\frac{\Gamma(s_n+z)\Gamma(-z)}{\Gamma(s_n)}
\zeta_{n-1,n-1}(\mathbf{s}_{n-1}^*(z),\boldsymbol{\gamma}_{n-1},\mathbf{b}_{n-1},
\boldsymbol{\mu}_{n-1})\notag\\
&\qquad\times \zeta\left(-z,\frac{b_n-b_{n-1}}{\gamma_n}\right)\gamma_n^z dz,\notag
\end{align}
where $\mathbf{s}_{n-1}^*(z)=(s_1,\ldots,s_{n-2},s_{n-1}+s_n+z)$.
(Under the assumption \eqref{1-6}, both of the above two zeta factors in the integrand are convergent.)

Let $M$ be a positive integer, and now we shift the path of integration to $\Re z=M+1/2$.
We claim that {\it this shifting is possible, and also we can remove the 
assumption \eqref{1-3}}.
In fact, in the strip $c\leq \Re z\leq M+1/2$, by Stirling's formula we have 
$$
\Gamma(s_n+z)\Gamma(-z)\ll e^{-\pi(|\Im s_n|/2+|\Im z|)}
(|\Im s_n|+|\Im z|+1)^{\Re s_n+\Re z-1/2}(|\Im z|+1)^{-\Re z-1/2}.
$$
The factor 
$\zeta_{n-1,n-1}(\mathbf{s}_{n-1}^*(z),\boldsymbol{\gamma}_{n-1},\mathbf{b}_{n-1},
\boldsymbol{\mu}_{n-1})$
is $O(1)$ for any $\b_{n-1}$ satisfying \eqref{condition_b},
because it is in the domain of absolute convergence.
As we mentioned in
Section \ref{sec3}, the Hurwitz zeta-function
$\zeta(s,a)$ can be defined for any complex $a$ except for the case when 
$a=-l$, $l\in\mathbb{N}_0$.
Moreover it holds that
\begin{align}\label{Hurwitz_estimate}
\zeta(s,a/w)w^{-s}=O\left(|w|^{-\Re s}(|\Im s|+1)^{\max\{0,1-\Re s\}+\varepsilon}
\exp(|\Im s|\max\{|\arg a|,|\arg w|\})\right)
\end{align}
if $a/w\notin(-\infty,0]$ (see \cite[Lemma 2]{MatJNT}).   Therefore, under the 
assumption
$(b_n-b_{n-1})/\gamma_n\notin(-\infty,0]$,
we have
\begin{align*}
\lefteqn{\zeta\left(-z,\frac{b_n-b_{n-1}}{\gamma_n}\right)\gamma_n^z}\\
&\quad\ll
(|\Im z|+1)^{\max\{0,1+\Re z\}+\varepsilon}\exp(|\Im z|\max\{|\arg(b_n-b_{n-1})|,
|\arg\gamma_n|\}).
\end{align*}
These estimates imply that the integrand on the right-hand side of \eqref{1-7} is
\begin{align*}
&\ll ({\rm The\; factor\; of\; polynomial\; order\; in}\; |\Im z|)\\
&\quad\times\exp(|\Im z|(\max\{|\arg(b_n-b_{n-1})|,|\arg\gamma_n|\}-\pi))
\end{align*}
(here, the implied constant may depend on $s_n$).
Therefore, if we further assume $b_n-b_{n-1}\notin(-\infty,0]$, we see that the integrand
is of exponential decay.    This implies that, only under the assumption
\eqref{condition_b}, the integral is absolutely convergent, and the
indicated shifting of the path of integral is possible.   
The assumption \eqref{1-3} is not necessary (or in other words, we can continue
\eqref{1-7} with respect to $\b$ to the wider region given by \eqref{condition_b}).
The proof of the claim is complete.

Carrying out this shifting, we find that
the relevant poles are $z=-1$ (from the Hurwitz zeta factor)
and $z=0,1,2,\ldots,M$ (from $\Gamma(-z)$).    Counting the residues, we obtain
\begin{align}\label{1-8}
&\zeta_{n,n-1}(\mathbf{s},\boldsymbol{\gamma},\mathbf{b},\boldsymbol{\mu}_{n-1})\\
&=\frac{1}{s_n-1}\zeta_{n-1,n-1}(\mathbf{s}_{n-1}^*(-1),\boldsymbol{\gamma}_{n-1},\mathbf{b}_{n-1},
\boldsymbol{\mu}_{n-1})\gamma_n^{-1}\notag\\
&+\sum_{l=0}^M\binom{-s_n}{l}\zeta_{n-1,n-1}(\mathbf{s}_{n-1}^*(l),\boldsymbol{\gamma}_{n-1},
\mathbf{b}_{n-1},\boldsymbol{\mu}_{n-1})\notag\\
&\qquad\times\zeta\left(-l,\frac{b_n-b_{n-1}}{\gamma_n}\right)\gamma_n^l\notag\\
&+\frac{1}{2\pi i}\int_{(M+1/2)}\frac{\Gamma(s_n+z)\Gamma(-z)}{\Gamma(s_n)}
\zeta_{n-1,n-1}(\mathbf{s}_{n-1}^*(z),\boldsymbol{\gamma}_{n-1},\mathbf{b}_{n-1},
\boldsymbol{\mu}_{n-1})\notag\\
&\qquad\times \zeta\left(-z,\frac{b_n-b_{n-1}}{\gamma_n}\right)\gamma_n^z dz.\notag
\end{align}
Since $\zeta_{n-1,n-1}$ is entire, the poles (in $z$) of the integrand of the above integral
are $z=-1, 0,1,2,\ldots$ and $z=-s_n,-s_n-1,-s_n-2,\ldots$.
Therefore the above integral can be continued holomorphically to the region
satisfying $\Re(-s_n)<M+1/2$, that is, 
$$
\{\mathbf{s}\in\mathbb{C}^n\;|\;\Re s_n>-M-1/2\}.
$$
Since $M$ is arbitrary, we can show from \eqref{1-8} that
$\zeta_{n,n-1}(\mathbf{s},\boldsymbol{\gamma},\mathbf{b},\boldsymbol{\mu}_{n-1})$
can be continued meromorphically to the whole space $\mathbb{C}^n$.    Moreover, again noting
that $\zeta_{n-1,n-1}$ is entire, we find that the only possible singularity is the
hyperplane $s_n=1$.

%Furthermore, here we can weaken the restriction \eqref{1-3}.    As we mentioned in
%Section \ref{sec3}, the Hurwitz zeta-function
%$\zeta(s,a)$ can be defined for any complex $a$ except for the case when 
%$a=-l$, $l\in\mathbb{N}_0$.
%Moreover it holds that
%$$
%\zeta(s,a)=O\left((|t|+1)^{\max\{0,1-\sigma\}+\varepsilon}\exp(|t\arg a|)\right)
%\quad (s=\sigma+it)
%$$
%(see \cite[(2.13)]{MatJNT}).
%Therefore, if $\frac{b_n-b_{n-1}}{\gamma_n}\notin(-\infty,0]$, then the integral
%on the right-hand side of \eqref{1-8} converges absolutely, and hence it is possible
%to continue \eqref{1-8} with respect to $\mathbf{b}$ to a wider region, that is, 
%we can say that \eqref{1-8} is
%valid for any $\mathbf{b}$ satisfying
%\begin{align}\label{1-9}
%\Re b_j>-\Re \gamma_1\;(1\leq j\leq n)\quad{\rm and} \quad
%\frac{b_n-b_{n-1}}{\gamma_n}\notin(-\infty,0].
%\end{align}

Let $\mathbf{N}=(N_1,\ldots,N_n)\in\mathbb{N}_0^n$.
Then $\mathbf{s}=-\mathbf{N}$ is a regular point of the function
$\zeta_{n,n-1}(\mathbf{s},\boldsymbol{\gamma}_n,\mathbf{b}_n,\boldsymbol{\mu}_{n-1})$.

Put $\mathbf{s}=-\mathbf{N}$ on \eqref{1-8}.   Then the integral is equal to
0, because of the factor $\Gamma(s_n)$ on the denominator.    Also, when
$l>N_n$, then the binomial coefficient $\binom{N_n}{l}$
is equal to 0.   
(We may assume that $M$ is sufficiently large, satisfying $M>N_n$.)
Noting $\s_{n-1}^*(l)|_{\s=-\mathbf{N}}=-\mathbf{N}_{n-1}^*(-l)$, we obtain
the following explicit formula:
\begin{align}\label{1-10}
&\zeta_{n,n-1}(-\mathbf{N},\boldsymbol{\gamma},\mathbf{b},\boldsymbol{\mu}_{n-1})\\
&=-\frac{1}{N_n+1}\zeta_{n-1,n-1}(-\mathbf{N}_{n-1}^*(1),\boldsymbol{\gamma}_{n-1},\mathbf{b}_{n-1},
\boldsymbol{\mu}_{n-1})\gamma_n^{-1}\notag\\
&+\sum_{l=0}^{N_n}\binom{N_n}{l}\zeta_{n-1,n-1}(-\mathbf{N}_{n-1}^*(-l),\boldsymbol{\gamma}_{n-1},
\mathbf{b}_{n-1},\boldsymbol{\mu}_{n-1})\notag\\
&\qquad\times\zeta\left(-l,\frac{b_n-b_{n-1}}{\gamma_n}\right)\gamma_n^l.\notag
\end{align}
The special values 
$\zeta_{n-1,n-1}(-\k_{n-1},\boldsymbol{\gamma}_{n-1},\mathbf{b}_{n-1},\boldsymbol{\mu}_{n-1})$
(where $\k_{n-1}=(k_1,\dots, k_{n-1}) \in \N_0^{n-1}$) are evaluated explicitly by  
Proposition \ref{prop_dC}
in terms of special values of the Lerch zeta-function
$\phi_{\mu_j}(s)$.
Since
$$-N_{n-1}-N_n+l\leq -N_{n-1}\leq 0$$ 
for $l\leq N_n$, we can apply Proposition \ref{prop_dC} to the factors $\zeta_{n-1,n-1}$ 
appearing on the right-hand side of the above.    

Let $b_1'=b_1$, $b_j'=b_j-(\gamma_2+\cdots+\gamma_j)$ ($2\leq j\leq n-1$).   Then we can write
$$
\zeta_{n-1,n-1}(\s_{n-1},\boldsymbol{\gamma}_{n-1},\mathbf{b}_{n-1},
\boldsymbol{\mu}_{n-1})
=\sum_{m_1,\ldots,m_{n-1}\geq 1}\frac{\prod_{j=1}^{n-1}\mu_j^{m_j}}
{\prod_{j=1}^{n-1}(\gamma_1 m_1+\cdots+\gamma_j m_j+b'_j)^{s_j}},
$$
which agrees with the notation of Proposition \ref{prop_dC}.    
Since Proposition \ref{prop_dC} is proved for polynomials of real coefficients, 
here we temporarily assume that $\gamma_j, b_j\in\mathbb{R}$ ($1\leq j\leq n$).
Then the {\it HDF} condition is clearly satisfied, and
by Proposition \ref{prop_dC} we have
\begin{eqnarray*}
\lefteqn{\zeta_{n-1,n-1}(-\mathbf{N}_{n-1}^*(-l),\boldsymbol{\gamma}_{n-1},\mathbf{b}_{n-1},
\boldsymbol{\mu}_{n-1})}\\
&&=\sum_{\k \in \N_0^{n-1} \atop 
|\k| \leq |\Nb_{n-1}^*(-l)|} \widetilde{c}_{n-1}(\b'_{n-1}; \Nb_{n-1}^*(-l), \k)  
\prod_{j=1}^{n-1}\phi_{\mu_j} (-k_j) \quad (l\geq -1),
\end{eqnarray*}
where $\widetilde{c}_{n-1}(\b'_{n-1}; \Nb_{n-1}^*(-l), \k)$ is that defined by
\eqref{def-widetilde-c}.
Applying this to the right-hand side of \eqref{1-10}, and noting Remark \ref{rem_cc},
we obtain the assertion of Theorem \ref{main2}. 
The restriction $\gamma_j, b_j\in\mathbb{R}$ can be removed by the analytic continuation
with respect to $\gamma_j, b_j$.
\qed

%{\bf Remark:}$\;$
\begin{remark}
For $\mub \in \left(\mathbb{T}\setminus \{1\}\right)^n$, we can apply the same argument as above to
$\zeta_{n,n}(\mathbf{s},\boldsymbol{\gamma},\mathbf{b},\boldsymbol{\mu})$.
The result is that the special values
$\zeta_{n,n}(-\mathbf{N},\boldsymbol{\gamma},\mathbf{b},\boldsymbol{\mu})$
can be written in terms of
$\zeta_{n-1,n-1}(-\mathbf{N}_{n-1}^*(-l),\boldsymbol{\gamma}_{n-1},
\mathbf{b}_{n-1},\boldsymbol{\mu}_{n-1})$
and special values of the Hurwitz-Lerch zeta-function
$$
\phi\left(s,\frac{b_n-b_{n-1}}{\gamma_n},\mu_n\right)=\sum_{m=0}^{\infty}
\mu_n^m\left(m+\frac{b_n-b_{n-1}}{\gamma_n}\right)^{-s}.
$$
This gives another way of computing the special values by induction.
\end{remark}

%%%%%%%%%%%%%%%%%%%%%%%%%%%%%%%%%%%%%%%%%%%%%%%%%%%%%%%%%%%%%%%%%%%%%%%%%%%%%%%%%%%%%%%
\section{Proof of Theorem \ref{main3}}\label{sec5}
%%%%%%%%%%%%%%%%%%%%%%%%%%%%%%%%%%%%%%%%%%%%%%%%%%%%%%%%%%%%%%%%%%%%%%%%%%%%%%%%%%%%%%%
%\subsection{Behavior of $\zeta_{n,n-1}$ around its
%singularity $s_n=1$}
%%%%%%%%%%%%%%%%%%%%%%%%%%%%%%%%%%%%%%%%%%%%%%%%%%%%%%%%%%%%%%%%%%%%%%%%%%%%%%%%%%%%%%%%%

First, as a preparation, we consider the behavior of $\zeta_{n,n-1}$ around its
singularity $s_n=1$.   We will use in the sequel of this section the notations 
of Section \ref{sec4}.

Let $n\geq 2$.
Let $s_n=1+\delta_n$, where $\delta_n$ is a small (non-zero) complex number.
Then
\begin{align*}
&\frac{1}{s_n-1}\zeta_{n-1,n-1}(\mathbf{s}_{n-1}^*(-1),\boldsymbol{\gamma}_{n-1},\mathbf{b}_{n-1},
\boldsymbol{\mu}_{n-1})\gamma_n^{-1}\\
&=\frac{1}{\delta_n}\zeta_{n-1,n-1}((s_1,\ldots,s_{n-2},s_{n-1}+\delta_n),\boldsymbol{\gamma}_{n-1},\mathbf{b}_{n-1},
\boldsymbol{\mu}_{n-1})\gamma_n^{-1}\\
&=\frac{1}{\delta_n}\zeta_{n-1,n-1}(\mathbf{s}_{n-1},\boldsymbol{\gamma}_{n-1},\mathbf{b}_{n-1},
\boldsymbol{\mu}_{n-1})\gamma_n^{-1}\\
&+\frac{\partial}{\partial s_{n-1}}\zeta_{n-1,n-1}(\mathbf{s}_{n-1},\boldsymbol{\gamma}_{n-1},
\mathbf{b}_{n-1},\boldsymbol{\mu}_{n-1})\gamma_n^{-1}+O(|\delta_n|),
\end{align*}
so from \eqref{1-8} we have
\begin{align}\label{1-8bis}
&\zeta_{n,n-1}((s_1,\ldots,s_{n-1},1+\delta_n),\boldsymbol{\gamma}_n,\mathbf{b}_n,\boldsymbol{\mu}_{n-1})\\
&=\frac{1}{\delta_n}\zeta_{n-1,n-1}(\mathbf{s}_{n-1},\boldsymbol{\gamma}_{n-1},\mathbf{b}_{n-1},
\boldsymbol{\mu}_{n-1})\gamma_n^{-1}\notag\\
&+\frac{\partial}{\partial s_{n-1}}\zeta_{n-1,n-1}(\mathbf{s}_{n-1},\boldsymbol{\gamma}_{n-1},
\mathbf{b}_{n-1},\boldsymbol{\mu}_{n-1})\gamma_n^{-1}\notag\\
&+\sum_{l=0}^M\binom{-1}{l}\zeta_{n-1,n-1}((s_1,\ldots,s_{n-2},s_{n-1}+1+l),\boldsymbol{\gamma}_{n-1},
\mathbf{b}_{n-1},\boldsymbol{\mu}_{n-1})\notag\\
&\qquad\times\zeta\left(-l,\frac{b_n-b_{n-1}}{\gamma_n}\right)\gamma_n^l\notag\\
&+\frac{1}{2\pi i}\int_{(M+1/2)}\Gamma(1+z)\Gamma(-z)
\zeta_{n-1,n-1}((s_1,\ldots,s_{n-2},s_{n-1}+1+z),\notag\\
&\qquad\boldsymbol{\gamma}_{n-1},\mathbf{b}_{n-1},
\boldsymbol{\mu}_{n-1})\zeta\left(-z,\frac{b_n-b_{n-1}}{\gamma_n}\right)\gamma_n^z dz
+O(|\delta_n|)\notag\\
&=\frac{1}{\delta_n}\zeta_{n-1,n-1}(\mathbf{s}_{n-1},\boldsymbol{\gamma}_{n-1},\mathbf{b}_{n-1},
\boldsymbol{\mu}_{n-1})\gamma_n^{-1}\notag\\
&+B(\mathbf{s}_{n-1},\boldsymbol{\gamma}_{n-1},\mathbf{b}_{n-1},
\boldsymbol{\mu}_{n-1})+O(|\delta_n|),\notag
\end{align}
say.   
So far we have worked under the assumption $n\geq 2$.   However when $n=1$, we see that
\begin{align*}
&\zeta_{1,0}(1+\delta_1,\boldsymbol{\gamma}_{1},\mathbf{b}_{1},
\boldsymbol{\mu}_{0})
=\sum_{m_1=1}^{\infty}(\gamma_1 m_1+b_1)^{-1-\delta_1}\\
&=\gamma_1^{-1-\delta_1}\zeta(1+\delta_1,b_1/\gamma_1)-b_1^{-1-\delta_1}
=\frac{1}{\delta_1}\gamma_1^{-1}+({\rm constant})+O(|\delta_1|),
\end{align*}
so \eqref{1-8bis} is valid also for $n=1$ with the convention $\zeta_{0,0}=1$.

%%%%%
Now we start the proof of Theorem \ref{main3}.   Let $n\geq 2$.
Fix $\gammab =(\gamma_1,\dots, \gamma_n) \in \C^n$ and  $\b =(b_1,\dots, b_n) \in \C^n$  such that 
$\Re (\gamma_j) >0$ and $\Re (b_j) > -\Re (\gamma_1)$ for all $j=1,\dots, n$ and 
$(b_n-b_{n-1})/\gamma_n \notin(-\infty,0]$.
Fix also  $\mub_{n-2}=(\mu_1,\dots, \mu_{n-2}) \in \left(\mathbb{T}\setminus \{1\}\right)^{n-2}$.

Assume $\mathbf{s}\in\mathcal{A}_n$.
Analogous to \eqref{1-7}, this time we obtain
\begin{align}\label{1-11}
&\zeta_{n,n-2}(\mathbf{s},\boldsymbol{\gamma},\mathbf{b},\boldsymbol{\mu}_{n-2})\\
&=\frac{1}{2\pi i}\int_{(c)}\frac{\Gamma(s_n+z)\Gamma(-z)}{\Gamma(s_n)}
\zeta_{n-1,n-2}(\mathbf{s}_{n-1}^*(z),\boldsymbol{\gamma}_{n-1},\mathbf{b}_{n-1},
\boldsymbol{\mu}_{n-2})\notag\\
&\qquad\times \zeta\left(-z,\frac{b_n-b_{n-1}}{\gamma_n}\right)\gamma_n^z dz,\notag
\end{align}
where $-\Re s_n<c<-1$.
The factor 
$$\zeta_{n-1,n-2}(\mathbf{s}_{n-1}^*(z),\boldsymbol{\gamma}_{n-1},\mathbf{b}_{n-1},
\boldsymbol{\mu}_{n-2})$$
is not entire, but its pole $s_{n-1}+s_n+z=1$, that is, $z=1-s_{n-1}-s_n$  is irrelevant when
we shift the path from $\Re z=c$ to $\Re z=M+1/2$, because
$\Re(1-s_{n-1}-s_n)<-\Re s_n<c$.
Therefore, analogous to \eqref{1-8}, we have
\begin{align}\label{1-12}
&\zeta_{n,n-2}(\mathbf{s},\boldsymbol{\gamma},\mathbf{b},\boldsymbol{\mu}_{n-2})\\
&=\frac{1}{s_n-1}\zeta_{n-1,n-2}(\mathbf{s}_{n-1}^*(-1),\boldsymbol{\gamma}_{n-1},\mathbf{b}_{n-1},
\boldsymbol{\mu}_{n-2})\gamma_n^{-1}\notag\\
&+\sum_{l=0}^M\binom{-s_n}{l}\zeta_{n-1,n-2}(\mathbf{s}_{n-1}^*(l),\boldsymbol{\gamma}_{n-1},
\mathbf{b}_{n-1},\boldsymbol{\mu}_{n-2})\notag\\
&\qquad\times\zeta\left(-l,\frac{b_n-b_{n-1}}{\gamma_n}\right)\gamma_n^l\notag\\
&+\frac{1}{2\pi i}\int_{(M+1/2)}\frac{\Gamma(s_n+z)\Gamma(-z)}{\Gamma(s_n)}
\zeta_{n-1,n-2}(\mathbf{s}_{n-1}^*(z),\boldsymbol{\gamma}_{n-1},\mathbf{b}_{n-1},
\boldsymbol{\mu}_{n-2})\notag\\
&\qquad\times \zeta\left(-z,\frac{b_n-b_{n-1}}{\gamma_n}\right)\gamma_n^z dz.\notag
\end{align}
Here, the (unique) singularity of 
$\zeta_{n-1,n-2}(\mathbf{s}_{n-1}^*(l),\boldsymbol{\gamma}_{n-1},
\mathbf{b}_{n-1},\boldsymbol{\mu}_{n-2})$
is $s_{n-1}+s_n=1-l$ ($l=-1,0,1,2,\ldots,M$).
Letting $M\to\infty$ we obtain the meromorphic continuation of 
$\zeta_{n,n-2}(\mathbf{s},\boldsymbol{\gamma},\mathbf{b},\boldsymbol{\mu}_{n-2})$, 
and its (possible) singularities are
\begin{align}\label{1-13}
\left\{
  \begin{array}l
    s_n=1,\\
    s_{n-1}+s_n=2,1,0,-1,-2,\ldots
  \end{array}\right.
\end{align}

Now we want to evaluate the value of 
$\zeta_{n,n-2}(\mathbf{s},\boldsymbol{\gamma},\mathbf{b},\boldsymbol{\mu}_{n-2})$
at $\mathbf{s}=-\mathbf{N}\in -\mathbb{N}_0^n$.   
The above \eqref{1-13} shows that $\mathbf{s}=-\mathbf{N}$ can be on a singular locus.

Let $\boldsymbol{\delta}=(\delta_1,\ldots,\delta_n)$, 
where $\delta_j$s are small (non-zero) complex numbers, and
observe the right-hand side of
\eqref{1-12} with $\mathbf{s}=-\mathbf{N}+\boldsymbol{\delta}$.    Since
$$
-\mathbf{N}_{n-1}^*(-l)=(-N_1,\ldots,-N_{n-2},-N_{n-1}-N_n+l), 
$$
the only relevant singularity of
$\zeta_{n-1,n-2}$ factor appears when $l=  N_{n-1}+N_n+1$.
(We may assume $M>N_{n-1}+N_n+1$.)
Analogous to \eqref{1-10}, we have
\begin{align}\label{1-14}
&\zeta_{n,n-2}(-\mathbf{N}+\boldsymbol{\delta},\boldsymbol{\gamma},\mathbf{b},\boldsymbol{\mu}_{n-2})\\
&=-\frac{1}{N_n+1}\zeta_{n-1,n-2}(-\mathbf{N}_{n-1}^*(1),\boldsymbol{\gamma}_{n-1},\mathbf{b}_{n-1},
\boldsymbol{\mu}_{n-2})\gamma_n^{-1}\notag\\
&+\sum_{l=0}^{N_n}\binom{N_n}{l}\zeta_{n-1,n-2}(-\mathbf{N}_{n-1}^*(-l),\boldsymbol{\gamma}_{n-1},
\mathbf{b}_{n-1},\boldsymbol{\mu}_{n-2})\notag\\
&\qquad\times\zeta\left(-l,\frac{b_n-b_{n-1}}{\gamma_n}\right)\gamma_n^l
+R(\boldsymbol{\delta})
+O\left(\max_{1\leq j\leq n}|\delta_j|\right),\notag
\end{align}
where $R(\boldsymbol{\delta})$ denotes the contribution coming from the term $l=N_{n-1}+N_n+1$.
Using \eqref{1-8bis}, we can evaluate $R(\boldsymbol{\delta})$ as follows:
\begin{align}\label{1-15}
&R(\boldsymbol{\delta})=\binom{N_n-\delta_n}{N_{n-1}+N_n+1}\\
&\times\zeta_{n-1,n-2}((-N_1+\delta_1,\ldots,-N_{n-2}+\delta_{n-2},1+\delta_{n-1}+\delta_n),\boldsymbol{\gamma}_{n-1},
\mathbf{b}_{n-1},\boldsymbol{\mu}_{n-2})\notag\\
&\times\zeta\left(-N_{n-1}-N_n-1,\frac{b_n-b_{n-1}}{\gamma_n}\right)
\gamma_n^{N_{n-1}+N_n+1}\notag\\
&=\frac{(N_n-\delta_n)(N_n-1-\delta_n)\cdots
(-\delta_n)\cdots(-N_{n-1}-\delta_n)}{(N_{n-1}+N_n+1)!}\notag\\
&\times\biggl\{\frac{1}{\delta_{n-1}+\delta_n}\zeta_{n-2,n-2}(-\mathbf{N}_{n-2}
+\boldsymbol{\delta}_{n-2},\boldsymbol{\gamma}_{n-2},
\mathbf{b}_{n-2},\boldsymbol{\mu}_{n-2})\gamma_{n-1}^{-1}\biggr.\notag\\
&\biggl.+B(-\mathbf{N}_{n-2}
+\boldsymbol{\delta}_{n-2},\boldsymbol{\gamma}_{n-2},
\mathbf{b}_{n-2},\boldsymbol{\mu}_{n-2})+O(|\delta_{n-1}+\delta_n|)\biggr\}\notag\\
&\times\zeta\left(-N_{n-1}-N_n-1,\frac{b_n-b_{n-1}}{\gamma_n}\right)
\gamma_n^{N_{n-1}+N_n+1}\notag\\
&=\frac{(N_n-\delta_n)(N_n-1-\delta_n)\cdots
(-\delta_n)\cdots(-N_{n-1}-\delta_n)}{(N_{n-1}+N_n+1)!(\delta_{n-1}+\delta_n)}\notag\\
&\times\zeta_{n-2,n-2}(-\mathbf{N}_{n-2}
+\boldsymbol{\delta}_{n-2},\boldsymbol{\gamma}_{n-2},
\mathbf{b}_{n-2},\boldsymbol{\mu}_{n-2})\notag\\
&\times\zeta\left(-N_{n-1}-N_n-1,\frac{b_n-b_{n-1}}{\gamma_n}\right)\gamma_{n-1}^{-1}
\gamma_n^{N_{n-1}+N_n+1}\notag\\
&+O(|\delta_n|),\notag
\end{align}
where $B(\cdot)$ is defined in \eqref{1-8bis}.
This formula describes the situation of indeterminacy.    
We may understand the behavior of
$\zeta_{n,n-2}$ around the point $\mathbf{s}=-\mathbf{N}$ from \eqref{1-14} and \eqref{1-15}.
This ends the proof of Theorem \ref{main3}. \qed

%%%%%%%%%%%%%%%%%%%%%%%%%%%%%%%%%%%%%%%%%%%%%%%%%%%%%%%%%%%%%%%%%%%%%%%%%%%%%%%%
\section{The power sum case}\label{sec6}
%%%%%%%%%%%%%%%%%%%%%%%%%%%%%%%%%%%%%%%%%%%%%%%%%%%%%%%%%%%%%%%%%%%%%%%%%%%%%%%%%%%

In this final section we prove Theorem \ref{main_powercase}.
The series 
$\zeta_{n,k}(\mathbf{s},\mathbf{h},\boldsymbol{\gamma},\mathbf{b},\boldsymbol{\mu}_k)$,
defined by \eqref{2-1},
is an obvious generalization of
$\zeta_{n,k}(\mathbf{s},\boldsymbol{\gamma},\mathbf{b},\boldsymbol{\mu}_k)$
(with a slight change of the condition of the summation), and hence it can be
treated in a quite similar way as in the linear case.

First assume $\s\in\mathcal{A}_n$.
The analogue of \eqref{1-7} is
\begin{align}\label{2-2}
&\zeta_{n,n-1}(\mathbf{s},\mathbf{h},\boldsymbol{\gamma},\mathbf{b},\boldsymbol{\mu}_{n-1})\\
&=\frac{1}{2\pi i}\int_{(c)}\frac{\Gamma(s_n+z)\Gamma(-z)}{\Gamma(s_n)}
\zeta_{n-1,n-1}(\mathbf{s}_{n-1}^*(z),\mathbf{h}_{n-1},\boldsymbol{\gamma}_{n-1},\mathbf{b}_{n-1},
\boldsymbol{\mu}_{n-1})\notag\\
&\qquad\times \zeta\left(-z,h_n,\frac{b_n-b_{n-1}}{\gamma_n}\right)\gamma_n^z dz,\notag
\end{align}
where $-\Re s_n<c<-1$ and
\begin{align}\label{2-3}
\zeta(s,h,b)=\sum_{m=0}^{\infty}\frac{1}{(m^h+b)^s}
\qquad(h\in\mathbb{N}, b\in\mathbb{C}, |\arg b|<\pi).
\end{align}

The analytic properties of $\zeta(s,h,b)$ can also be studied by using the Mellin-Barnes
formula.    

\begin{lemma}\label{lem_h}
The series $\zeta(s,h,b)$ can be continued meromorphically to the whole complex plane.
When $h=1$ (the case of the Hurwitz zeta-function), it has only one pole at $s=1$, 
while when $h\geq 2$, it has infinitely many poles
$s=-l+h^{-1}$ $(l\in\mathbb{N}_0)$.
\end{lemma}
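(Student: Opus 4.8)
The plan is to run the Mellin--Barnes argument of Section \ref{sec4} in dimension one, applied to $\zeta(s,h,b)=\sum_{m\ge 0}(m^h+b)^{-s}$. The $m=0$ term $b^{-s}$ is entire in $s$ (recall $|\arg b|<\pi$), so it suffices to continue the tail $\sum_{m\ge 1}(m^h+b)^{-s}$. For $m\ge 1$ I would write $(m^h+b)^{-s}=m^{-hs}\bigl(1+b m^{-h}\bigr)^{-s}$ and apply \eqref{1-2} with $\lambda=b m^{-h}$ (which satisfies $|\arg\lambda|<\pi$): first assuming $\Re s>1/h$ and choosing $c$ with $1/h-\Re s<c<0$, one interchanges the resulting integral with the sum over $m$ (absolute convergence, using the exponential decay of $\Gamma(s+z)\Gamma(-z)$ in $\Im z$ from Stirling together with $\sum_m m^{-\Re(h(s+z))}<\infty$ on $\Re z=c$), getting
\begin{align*}
\sum_{m\ge 1}(m^h+b)^{-s}
=\frac{1}{2\pi i}\int_{(c)}\frac{\Gamma(s+z)\Gamma(-z)}{\Gamma(s)}\,b^{z}\,\zeta\bigl(h(s+z)\bigr)\,dz ,
\end{align*}
where $\zeta$ on the right is the Riemann zeta-function.

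Next I would shift the path to $\Re z=M+\tfrac12$ for an integer $M$, exactly as in the passage from \eqref{1-7} to \eqref{1-8}. Since $c>1/h-\Re s$, the pole of $\zeta\bigl(h(s+z)\bigr)$ at $z=1/h-s$ lies to the left of $(c)$ and is never crossed; the only poles met are the simple poles $z=0,1,\dots,M$ of $\Gamma(-z)$, whose residues produce the terms $\binom{-s}{n}b^{n}\zeta\bigl(h(s+n)\bigr)$. Thus, for every $M$,
\begin{align*}
\zeta(s,h,b)=b^{-s}+\sum_{n=0}^{M}\binom{-s}{n}b^{n}\,\zeta\bigl(h(s+n)\bigr)
+\frac{1}{2\pi i}\int_{(M+1/2)}\frac{\Gamma(s+z)\Gamma(-z)}{\Gamma(s)}\,b^{z}\,\zeta\bigl(h(s+z)\bigr)\,dz .
\end{align*}
The same Stirling estimate together with the classical polynomial growth of $\zeta$ on vertical lines shows this last integral is absolutely convergent and holomorphic in $s$ for $\Re s>1/h-M-\tfrac12$ (on $\Re z=M+\tfrac12$ the only obstructions would be the pole $z=1/h-s$ of $\zeta$ and the poles $z=-s-k$ of $\Gamma(s+z)$, both avoided there in that range). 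Letting $M\to\infty$ yields the meromorphic continuation to all of $\mathbb{C}$.

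It remains to identify the poles. As $b^{-s}$ and the polynomials $\binom{-s}{n}b^{n}$ are entire, the only candidate poles come from the factors $\zeta\bigl(h(s+n)\bigr)$, located at $s=h^{-1}-n=-l+h^{-1}$, $l\in\mathbb{N}_0$. When $h=1$ there is a cancellation: the pole of $\zeta(s+n)$ at $s=1-n$ is, for $n\ge 1$, killed by the zero of $\binom{-s}{n}=\tfrac{(-1)^n}{n!}s(s+1)\cdots(s+n-1)$ at $s=1-n$ (the factor $s+n-1$ vanishes there), so only $n=0$ contributes and the unique pole is $s=1$. When $h\ge 2$ the point $h^{-1}-n$ is not an integer, hence $\binom{-s}{n}\neq 0$ there; moreover the points $h^{-1}-n$ are pairwise distinct and, for $\Re s>h^{-1}-M-\tfrac12$, only the terms with $n\le M$ are present, so no cancellation between different $n$ or with the holomorphic integral can occur, and all the $s=-l+h^{-1}$ ($l\in\mathbb{N}_0$) are genuine poles. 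I expect the only delicate point to be the uniform decay estimates justifying the interchange of sum and integral and the contour shift; the pole analysis, including the cancellation that separates the cases $h=1$ and $h\ge 2$, is then just the bookkeeping above. \qed
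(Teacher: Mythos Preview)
Your argument is correct and follows essentially the same route as the paper: separate the $m=0$ term, apply the Mellin--Barnes formula \eqref{1-2} to the tail to obtain the integral representation with $\zeta(h(s+z))$, shift the contour to $\Re z=M+1/2$ collecting the residues at $z=0,1,\dots,M$, and read off the poles from the factors $\zeta(h(s+n))$, with the cancellation against $\binom{-s}{n}$ when $h=1$. The paper proceeds identically (it even computes the residue $\tfrac{1}{h}\binom{l-h^{-1}}{l}b^l$ at $s=-l+h^{-1}$ for $h\ge 2$); your write-up is slightly more explicit about why no cancellation can occur when $h\ge 2$.
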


\begin{proof}
First assume $\Re s>1$.
Using the Mellin-Barnes formula \eqref{1-2} we have
\begin{align}\label{2-4}
&\zeta(s,h,b)=b^{-s}+\sum_{m=1}^{\infty}m^{-hs}(1+b/m^h)^{-s}\\
&=b^{-s}+\frac{1}{2\pi i}\sum_{m=1}^{\infty}m^{-hs}\int_{(c_1)}\frac{\Gamma(s+z)\Gamma(-z)}{\Gamma(s)}
\left(\frac{b}{m^h}\right)^z dz\notag
\end{align}
($-\Re s<c_1<0$), which is, after changing the order of integration and summation,
\begin{align}\label{2-5}
=b^{-s}+\frac{1}{2\pi i}\int_{(c_1)}\frac{\Gamma(s+z)\Gamma(-z)}{\Gamma(s)}
\zeta(h(s+z))b^z dz.
\end{align}
To assure the convergence of $\zeta(h(s+z))$, we have to choose $c_1$ satisfying
$h^{-1}-\Re s<c_1<0$.
Now, shift the path to $\Re z=M+1/2$ (which is possible because
$|\arg b|<\pi$), and count the residues of relevant poles at
$z=0,1,2,\ldots,M$.   We obtain
\begin{align}\label{2-6}
\zeta(s,h,b)&=b^{-s}+\sum_{l=0}^M \binom{-s}{l}\zeta(h(s+l))b^l\\
&+\frac{1}{2\pi i}\int_{(M+1/2)}\frac{\Gamma(s+z)\Gamma(-z)}{\Gamma(s)}
\zeta(h(s+z))b^z dz.\notag
\end{align}
Considering the situation $M\to\infty$, we find that \eqref{2-6} gives the meromorphic
continuation of $\zeta(s,h,b)$ to the whole plane.   The Riemann zeta factor 
in the sum on the
right-hand side gives the poles (of order at most 1) at $s=-l+h^{-1}$ ($l\in\mathbb{N}_0$).

When $h=1$, the poles $s=-l+h^{-1}=-l+1$ are cancelled with the binomial factor for $l\geq 1$, so the only pole is $s=1$.    This is of course the case of the Hurwitz zeta-function.
When $h\geq 2$, all of $s=-l+h^{-1}$ ($l\in\mathbb{N}_0$) are really poles.
The residue at $s=-l+h^{-1}$ is
\begin{align}\label{2-7}
\frac{1}{h}\binom{l-h^{-1}}{l}b^l.
\end{align}
\end{proof}

Now, using \eqref{2-6} we evaluate $\zeta(s,h,b)$ for any fixed $s\in\mathbb{C}$.  
Choose $M$ so large as $\Re (h(s+z))>1$ for $\Re z=M+1/2$.  
Denote the integral on the right-hand side of \eqref{2-6} by $J(M)$. 
Then, putting $s=\sigma+it$ and $z=M+1/2+iy$, we see that 
\begin{align*}
J(M)\ll& e^{\pi|t|/2}(|t|+1)^{1/2-\sigma}\int_{-\infty}^{\infty}e^{-\pi|t+y|/2}
(|t+y|+1)^{\sigma+M}\\
&\qquad\times e^{-\pi|y|/2}(|y|+1)^{-M-1}|b|^{M+1/2}e^{|y\arg b|}dy\\
&=e^{\pi|t|/2}(|t|+1)^{1/2-\sigma}|b|^{M+1/2}
 \int_{-\infty}^{\infty}(|t+y|+1)^{\sigma+M}\times (|y|+1)^{-M-1}\\
&\qquad\times\exp\left(-(\pi/2)|t+y|+(|\arg b|-\pi/2)|y|\right)dy.
\end{align*}
Denote the integral on the right-hand side here by $J_1(M)$, and
apply \cite[Lemma 4]{MatJNT} to evaluate $J_1(M)$.     We find that 
\begin{align*}
J_1(M)\ll& (1+(|t|+1)^{\sigma+M})(|t|+1)^{-M-1+\delta(b)}e^{(|\arg b|-\pi/2)|t|}\\
&\qquad + (1+(|t|+1)^{\sigma+M})e^{-\pi|t|/2},
\end{align*} 
where $\delta(b)=1$ if $\arg b=0$ (that is, $b\in\mathbb{R}_{>0}$)
and $\delta(b)=0$ otherwise.
Therefore we have
\begin{align}\label{J(M)}
J(M)\ll |b|^{M+1/2}A_1(|t|)e^{|t\arg b|},
\end{align}
where $A_1(|t|)$ (and $A_2(|t|)$, $A_3(|t|)$ hereafter)
denotes a certain quantity which is of polynomial order in $|t|$.
Therefore from \eqref{2-6} we find that 
\begin{align}\label{zeta_eval}
\zeta(s,h,b)\ll |b|^{\max\{-\sigma,M+1/2\}}A_2(|t|)e^{|t\arg b|}.
\end{align}
(Note that $A_1(|t|)$, $A_2(|t|)$ can be explicitly determined.)
In particular, $\zeta(s,h,1)$ is of polynomial order in $|t|$.
We use this fact to prove the following lemma.

\begin{lemma}\label{lem_est}
Let $s$ be in a fixed vertical strip in $\mathbb{C}$, excluding a small
neighborhood of $s=1$.    If $a,w\in\mathbb{C}$ with $a/w\notin(-\infty,0]$,
then
$$
\zeta(s,h,a/w)w^{-s}=O\left(|w|^{-\sigma}A_3(|t|)\exp(|t|\max\{|\arg a|,|\arg w|\}
)\right)
$$
(the implied constant may depend on $a/w$).
\end{lemma}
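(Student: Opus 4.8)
The plan is to adapt the proof of \cite[Lemma 2]{MatJNT} (which is the case $h=1$) to the power-sum setting, using the material already developed for Lemma \ref{lem_h} together with one elementary geometric observation. Throughout write $s=\sigma+it$ and set $\rho:=a/w$; we choose the argument of $a$ so that $\arg a=\arg\rho+\arg w$ with $\arg\rho,\arg w$ on the principal branch (this is exactly the convention under which $\rho^{-s}w^{-s}=a^{-s}$ and under which $\max\{|\arg a|,|\arg w|\}$ is the natural quantity). The crucial point is: since $\rho=a/w\notin(-\infty,0]$, the ray $\{u^hw+a:u\ge 0\}=\{vw+a:v\ge 0\}$ does not pass through the origin, so the argument of a point moving along it varies monotonically from $\arg a$ (at $v=0$) to $\arg w$ (as $v\to\infty$); hence
\[
\bigl|\arg(u^hw+a)\bigr|\le\max\{|\arg a|,|\arg w|\}\qquad(u\ge 0),
\]
and, equivalently, $|\arg(u^h+\rho)+\arg w|\le\max\{|\arg a|,|\arg w|\}$. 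Since $\zeta(s,h,a/w)=\sum_{m\ge 0}(m^h+\rho)^{-s}$ (see \eqref{2-3}), multiplying by $w^{-s}$ and using $|m^h+\rho|\,|w|=|m^hw+a|$ gives, for every $m$,
\[
\bigl|(m^h+\rho)^{-s}w^{-s}\bigr|=|m^hw+a|^{-\sigma}\exp\!\bigl(t(\arg(m^h+\rho)+\arg w)\bigr)\le|m^hw+a|^{-\sigma}e^{|t|\max\{|\arg a|,|\arg w|\}}.
\]

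First I would dispose of the region $\sigma>1/h$: summing over $m$ and writing $|m^hw+a|^{-\sigma}=|w|^{-\sigma}|m^h+\rho|^{-\sigma}$, the series $\sum_{m\ge 0}|m^h+\rho|^{-\sigma}$ is bounded by a constant depending only on $\rho$ (and on the strip), which already gives $\zeta(s,h,a/w)w^{-s}\ll_{\rho}|w|^{-\sigma}e^{|t|\max\{|\arg a|,|\arg w|\}}$. For a general fixed vertical strip I would \emph{not} plug $b=a/w$ into the Mellin--Barnes expansion \eqref{2-6}: the bound \eqref{J(M)} on the remaining integral, combined with $|w^{-s}|\le|w|^{-\sigma}e^{|t||\arg w|}$, produces only the factor $e^{|t|(|\arg\rho|+|\arg w|)}$, which in general strictly exceeds $e^{|t|\max\{|\arg a|,|\arg w|\}}$. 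Instead I would apply the Euler-Maclaurin summation formula of a sufficiently high order $K$ (chosen so that $K+h\sigma>1$ throughout the strip) to $\sum_{m\ge 0}g_0(m)$ with $g_0(u)=(u^h+\rho)^{-s}$, and then multiply the whole (meromorphic) identity by the $u$-independent constant $w^{-s}$.

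Each resulting term is then estimated by the same mechanism. The boundary terms $w^{-s}g_0^{(r)}(0)$ involve only $\rho^{-s-r/h}$ times a polynomial in $s$, and $|\,w^{-s}\rho^{-s-r/h}\,|=|w|^{-\sigma}|\rho|^{-\sigma-r/h}e^{t(\arg w+\arg\rho)}=|w|^{-\sigma}|\rho|^{-\sigma-r/h}e^{t\arg a}\ll_\rho|w|^{-\sigma}e^{|t|\max\{|\arg a|,|\arg w|\}}$. The integral term $w^{-s}\int_0^\infty g_0(u)\,du=\tfrac1h\,w^{-s}\rho^{1/h-s}\Gamma(1/h)\Gamma(s-1/h)/\Gamma(s)$ (read off the Beta integral, interpreted by meromorphic continuation, which is precisely where the poles described in Lemma \ref{lem_h} enter) is handled the same way for the $w^{-s}\rho^{1/h-s}$ part, with $\Gamma(s-1/h)/\Gamma(s)$ of polynomial order in $|t|$ by Stirling's formula away from those poles. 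Finally, for the remainder $w^{-s}\,\tfrac{(-1)^K}{K!}\int_1^\infty B_K(\{u\})g_0^{(K)}(u)\,du$ one checks by induction that every term of $g_0^{(K)}(u)$ has the shape $(\text{poly in }s)\,u^{b}(u^h+\rho)^{-s-c}$ with $b-hc=-K$, so that $\int_1^\infty u^{b}|u^h+\rho|^{-\sigma-c}\,du\ll_{\rho}1$ uniformly on the strip, while the exponential part $e^{t(\arg w+\arg(u^h+\rho))}$ is again $\le e^{|t|\max\{|\arg a|,|\arg w|\}}$ by the geometric fact. Collecting the pieces yields the stated estimate with $A_3(|t|)$ a polynomial and with the implied constant depending only on $\rho=a/w$.

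The main obstacle is exactly this coupling of exponentials: one must never split $e^{t\arg w}$ off from $e^{t\arg(u^h+\rho)}$ and bound each in absolute value, since that destroys the cancellation $\arg w+\arg\rho=\arg a$ and replaces $\max\{|\arg a|,|\arg w|\}$ by $|\arg w|+|\arg\rho|$; keeping $u^hw+a$ (equivalently $u^h+\rho$ together with the single external factor $w^{-s}$) undissociated is what makes the Euler-Maclaurin route succeed, and is also why \eqref{2-6}, in which $\rho^z$ and $w^{-s}$ are born apart, is inconvenient here. A secondary point worth recording: for $h\ge 2$ the function $\zeta(s,h,a/w)$ genuinely has the poles $s=-l+h^{-1}$ $(l\in\N_0)$ of Lemma \ref{lem_h}, so the estimate can hold only away from small neighborhoods of all of them; for $h=1$ only $s=1$ survives, the remaining potential poles $s=0,-1,-2,\ldots$ being cancelled inside the Euler-Maclaurin expansion just as in the proof of Lemma \ref{lem_h}.
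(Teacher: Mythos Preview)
Your argument is correct but follows a genuinely different route from the paper's. The paper adapts \cite[Lemma 2]{MatJNT} by expanding $\zeta(s,h,b)$ in the \emph{parameter} $b$ around $b=1$ via a Taylor formula with integral remainder,
\[
\zeta(s,h,b)=\sum_{n=0}^{N-1}\frac{(1-b)^n}{n!}(s)_n\,\zeta(s+n,h,1)
-(s)_N\int_1^b \frac{(\xi-b)^{N-1}}{(N-1)!}\,\zeta(s+N,h,\xi)\,d\xi,
\]
and then (i) bounds the terms $\zeta(s+n,h,1)$ by the polynomial-order estimate \eqref{zeta_eval} already derived, and (ii) estimates the remainder directly in the absolute-convergence region $\Re(s+N)>1$ for $\xi$ on the segment from $1$ to $a/w$, using exactly the same geometric fact about $\arg(u^h w+a)$ that you isolated. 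You instead apply Euler--Maclaurin to the defining sum in $m$ and analyze the resulting Beta-integral term and derivative structure of $g_0$. Both proofs rest on the same ``keep $w^{-s}$ coupled to $(u^h+\rho)^{-s}$'' principle you emphasize; the paper's version is shorter because it recycles \eqref{zeta_eval}, while yours is more self-contained but needs the bookkeeping on $g_0^{(K)}$. One small slip: your remainder integral should run from $0$, not $1$ (this is harmless since every term $u^{b}(u^h+\rho)^{-s-c}$ has $b\ge 0$, so integrability near $0$ is automatic). Your closing remark that for $h\ge 2$ one must also stay away from the poles $s=-l+h^{-1}$ is a fair point about the statement.
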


\begin{proof}
This lemma is an analogue of \cite[Lemma 2]{MatJNT}, and the proof is similar,
so we just give a brief sketch.
Let $s\in\mathbb{C}$, and we choose $N$ so large that $\Re(s+N)>1$.
As generalizations of \cite[(2.6), (2.10)]{MatJNT}, we can show
\begin{align}\label{lem2a}
\zeta(s,h,b)=\sum_{n=0}^{N-1}\frac{(1-b)^n}{n!}(s)_n \zeta(s+n,h,1)
-(s)_N\int_1^b \frac{(\xi-b)^{N-1}}{(N-1)!}\zeta(s+N,h,\xi)d\xi
\end{align}
(where $(s)_n$ denotes the Pochhammer symbol) and
\begin{align}\label{lem2b}
\zeta(s+N,h,\xi)w^{-s}\ll |w|^{-\sigma}\exp\left(|t|\max\{|\arg a|,|\arg w|\}\right) 
\end{align}
(for any $\xi$ on the segment joining 1 and $a/w$)
by the same argument.    
Putting $b=a/w$ in \eqref{lem2a}, and applying \eqref{lem2b} and the fact mentioned
just before the statement of the lemma, we obtain the assertion. 
\end{proof}

Now let us go back to \eqref{2-2}, and shift the path to $\Re z=M+1/3$.
Here, if we choose $\Re z=M+1/2$ as before, there appears a small problem when
$h_n=2$, so we choose $\Re z=M+1/3$.
The above Lemma \ref{lem_est} ensures that this shifting is possible
(similar to the argument in Section \ref{sec4}).
The relevant poles are $z=0,1,2,\ldots,M$ (from $\Gamma(-z)$)
and $z=-1$ (if $h_n=1$) or $z=l-h_n^{-1}$ ($0\leq l\leq M$, if $h_n\geq 2$).
Analogous to \eqref{1-8}, for $h_n\geq 2$, we obtain
\begin{align}\label{2-8}
&\zeta_{n,n-1}(\mathbf{s},\mathbf{h},\boldsymbol{\gamma},\mathbf{b},\boldsymbol{\mu}_{n-1})\\
&=\sum_{l=0}^M \frac{\Gamma(s_n+l-h_n^{-1})\Gamma(-l+h_n^{-1})}{\Gamma(s_n)}\notag\\
&\qquad\qquad\times\zeta_{n-1,n-1}(\mathbf{s}_{n-1}^*(l-h_n^{-1}),\mathbf{h}_{n-1},\boldsymbol{\gamma}_{n-1},
\mathbf{b}_{n-1},\boldsymbol{\mu}_{n-1})\notag\\
&\qquad\qquad\times \frac{1}{h_n}\binom{l-h_n^{-1}}{l}\left(\frac{b_n-b_{n-1}}{\gamma_n}\right)^l
\gamma_n^{l-h_n^{-1}}\notag\\
&+\sum_{l=0}^M\binom{-s_n}{l}\zeta_{n-1,n-1}(\mathbf{s}_{n-1}^*(l),
\mathbf{h}_{n-1},\boldsymbol{\gamma}_{n-1},
\mathbf{b}_{n-1},\boldsymbol{\mu}_{n-1})\notag\\
&\qquad\times\zeta\left(-l,h_n,\frac{b_n-b_{n-1}}{\gamma_n}\right)\gamma_n^l\notag\\
&+\frac{1}{2\pi i}\int_{(M+1/3)}\frac{\Gamma(s_n+z)\Gamma(-z)}{\Gamma(s_n)}
\zeta_{n-1,n-1}(\mathbf{s}_{n-1}^*(z),\mathbf{h}_{n-1},\boldsymbol{\gamma}_{n-1},\mathbf{b}_{n-1},
\boldsymbol{\mu}_{n-1})\notag\\
&\qquad\times \zeta\left(-z,h_n,\frac{b_n-b_{n-1}}{\gamma_n}\right)\gamma_n^z dz.\notag
\end{align}
%where 
%\begin{eqnarray*}
%\delta_1(l,h_n)=\left\{\begin{array}{lll}
%1 & {\rm if} & h_n\geq 2  \;{\rm or}\; h_n=1,l=0\\
%0 & {\rm if} & h_n=1, l\geq 1.
%\end{array}\right.
%\end{eqnarray*}
If $h_n=1$, then only the term corresponding to $l=0$ on the first sum appears, which is
equal to
\begin{align}\label{onlytheterm}
\frac{1}{s_n-1}\zeta_{n-1,n-1}(\mathbf{s}_{n-1}^*(-1),\mathbf{h}_{n-1},\boldsymbol{\gamma}_{n-1},
\mathbf{b}_{n-1},\boldsymbol{\mu}_{n-1})\gamma_n^{-1}.
\end{align}

Now put $\mathbf{s}=-\mathbf{N}$ and obtain an explicit formula, similar to
\eqref{1-10}.
Because of the existence of the factor $\Gamma(s_n)$ on the denominator, the
integral term vanishes.    If $h_n\geq 2$, the first sum also vanishes by the
same reason.    Therefore
\begin{align}\label{explicit_powercase}
&\zeta_{n,n-1}(-\mathbf{N},\mathbf{h},\boldsymbol{\gamma},\mathbf{b},\boldsymbol{\mu}_{n-1})\\
&= -\frac{\delta_{1,h_n}}{N_n+1}\zeta_{n-1,n-1}(-\mathbf{N}_{n-1}^*(1),\mathbf{h}_{n-1},\boldsymbol{\gamma}_{n-1},
\mathbf{b}_{n-1},\boldsymbol{\mu}_{n-1})\gamma_n^{-1}\notag\\
&+\sum_{l=0}^{N_n}\binom{N_n}{l}\zeta_{n-1,n-1}(-\mathbf{N}_{n-1}^*(-l),
\mathbf{h}_{n-1},\boldsymbol{\gamma}_{n-1},
\mathbf{b}_{n-1},\boldsymbol{\mu}_{n-1})\notag\\
&\qquad\times\zeta\left(-l,h_n,\frac{b_n-b_{n-1}}{\gamma_n}\right)\gamma_n^l.\notag
\end{align}
%where $\delta_{1,h_n}$ denotes the Kronecker delta.
Finally, applying Proposition \ref{prop_dC} to the right-hand side,
we arrive at the assertion of Theorem \ref{main_powercase}.

%%%%%%%%%%%%%%%%%%%%%%%%%%%%%%%%%%%%%%%%%%%%%%%%%%%%%%%%%%%%%%%%%%%%%%%%%%%%%%%%
%\section{The general case}\label{sec7}
%%%%%%%%%%%%%%%%%%%%%%%%%%%%%%%%%%%%%%%%%%%%%%%%%%%%%%%%%%%%%%%%%%%%%%%%%%%%%%%%

%More generally, we can consider the multiple Dirichlet series of the form 
%\begin{equation}\label{general}
%Z_{n,k}(\mathbf{s}, {\mathbf P},\boldsymbol{\mu}_k)=
%\sum_{m_1,\ldots,m_n\geq 1}\frac{\prod_{j=1}^k \mu_j^{m_j}}
%{\prod_{j=1}^n P_j(m_1,\dots, m_j)^{s_j}},
%\end{equation}
%where the $P_j$ are polynomials. It follows from the method of \cite{essouabriThesis} (see also \cite{essouabriFourier}) that these series have meromorphic continuation to $\C^n$ for 
%fairly general class of polynomials $P_j$. 
%However, the set of singularities is not easy to determine in this general setting. 
%In our forthcoming work  \cite{GMZV2}, we will
%overcome this problem and obtain some partial extensions of our results in the present %paper to  
%$Z_{n,k}(\mathbf{s},\P,\boldsymbol{\mu}_k)$.

%%%%%%%%%%%%%%%%%%%%%%%%%%%%%%%%%%%%%%%%%%%%%%%%%%%%%%%%%%%%%%%%%%%%%%%%%%%%%%%%%%%%%%%%%%%

\verb??\\
{\bf Driss Essouabri}\\
Univ. Lyon,
UJM-Saint-Etienne,\\
CNRS, Institut Camille Jordan UMR 5208,\\
Facult\'e des Sciences et Techniques,\\       
23 rue du Docteur Paul Michelon,\\
F-42023, Saint-Etienne, France\\
{\it E-mail address}: driss.essouabri@univ-st-etienne.fr

\medskip

\verb??\\
{\bf Kohji Matsumoto}\\
Graduate School of Mathematics,\\
Nagoya University,\\
Furo-cho, Chikusa-ku,\\
Nagoya 464-8602, Japan\\
{\it E-mail address}: kohjimat@math.nagoya-u.ac.jp

 \end{document}